\tikzset{every node/.style={circle,fill, inner sep=0pt, minimum size=1.5mm}, every picture/.style={scale=.85}}
\newcounter{tbox}
\newtheorem{lemma}{Lemma}
\newtheorem{corollary}{Corollary}
\newtheorem{theorem}{Theorem}
\newtheorem{claim}{Claim}
\newtheorem{thm}{Theorem}
	\title{Minimal induced subgraphs of the class of 2-connected non-Hamiltonian wheel-free graphs}
		\author{Aristotelis Chaniotis$^\ast$} 
		\author{Zishen Qu$^{\ast, \ddagger}$} 
		\author{Sophie Spirkl$^{\ast,\dagger}$}
		\address{$^{\dagger}$ {We acknowledge the support of the Natural Sciences and Engineering Research Council of Canada (NSERC), [funding reference number RGPIN-2020-03912]. Cette recherche a été financée par le Conseil de recherches en sciences naturelles et en génie du Canada (CRSNG), [numéro de référence RGPIN-2020-03912].}}
		\address{$^{\ast}$ Department of Combinatorics and Optimization, University of Waterloo, Waterloo, Ontario, N2L 3G1, Canada}
		\address{$^{\ddagger}$ Current address: Department of Mathematics, University of Illinois at Urbana-Champaign, Urbana, IL.}
\begin{document}
\maketitle
		
\begin{abstract}
Given a graph $G$ and a graph property $P$ we say that $G$ is minimal with respect to $P$ if no proper induced subgraph of $G$
has the property $P$. An HC-obstruction is a minimal 2-connected non-Hamiltonian graph. Given a graph $H$, a graph $G$ 
is $H$-free if $G$ has no induced subgraph isomorphic to $H$.
The main motivation for this paper originates from a theorem of 
Duffus, Gould, and Jacobson (1981), which characterizes all the minimal connected graphs with no Hamiltonian path.
In 1998, Brousek characterized all the claw-free HC-obstructions.
On a similar note, Chiba and Furuya (2021), 
characterized all (not only the minimal) 2-connected non-Hamiltonian
$\{K_{1,3}, N_{3,1,1}\}$-free graphs. Recently, Cheriyan, Hajebi, and two of us (2022), characterized 
all triangle-free HC-obstructions and all the HC-obstructions which are split graphs.
A wheel is a graph obtained from a cycle by adding a new vertex with at least three neighbors in the cycle. 
In this paper we characterize all the HC-obstructions which are wheel-free graphs.
\end{abstract}

\section{Introduction}
\label{sec.intro}

In this paper we consider finite undirected graphs without loops or multiple edges.

Let $G$ be a graph and let $X \subseteq V(G)$. We denote by $G[X]$ the induced subgraph of $G$ with vertex set $X$, 
and by $G \setminus X$ the graph $G[V(G)\setminus X]$.
Given two graphs $H$ and $G$, the graph $G$ is  {\em $H$-free} if it has no induced subgraph isomorphic to $H$.

Let $X \subseteq V(G)$.  The set $X$ is a {\em cutset} if $G \setminus X$ is not connected, and if $X$ is a cutset of cardinality $k$, then $X$
is a {\em $k$-cutset}. The set $X$ is a {\em clique}
if it is a set of pairwise adjacent vertices. If $X$ is both a clique and a cutset, then $X$ is a {\em clique cutset}. Let $F \subseteq E(G)$. The set $F$ is an 
{\em edge-cutset} if the graph $(V(G), E(G)\setminus F)$ is not connected. An edge-cutset of cardinality $k$ is a {\em $k$-edge-cutset}.

Let $v\in V(G)$ and let $X \subseteq V(G)$. 
We denote by $N_{X}(v)$ the set of neighbors of $v$ in $X$, by $d(v)$ the degree of the vertex $v$ in $G$, 
and by $\Delta(G)$ the maximum degree of a vertex in $G$. 
Given two graphs $G_{1}$ and $G_{2}$, we denote by $G_{1}\cup G_{2}$ the graph $(V(G_{1})\cup V(G_{2}), E(G_{1})\cup E(G_{2}))$.

A {\em Hamiltonian path} (resp.\ {\em Hamiltonian cycle}) in a graph $G$ is a (not necessarily induced) subgraph $H$ of $G$ which is
a path (resp.\ cycle), and $V(H) = V(G)$. A graph is  {\em Hamiltonian} if it has a Hamiltonian cycle and {\em non-Hamiltonian} otherwise.

Following the notation of \cite{2022.triangle-free.split-graphs.HC.obs}, 
we say that a graph $H$ is an {\em HP-obstruction} if $H$ is connected, has no Hamiltonian path, and every induced subgraph of
$H$ either equals $H$, is not connected, or has a Hamiltonian path. 
Analogously, a graph $H$ is an {\em HC-obstruction} if $H$ is 2-connected, has no Hamiltonian cycle, and every induced subgraph of $H$
either equals $H$, is not 2-connected, or is Hamiltonian.
	
Below we state some results which involve specific graphs not defined in the present paper. Our first motivation for this paper originates from the following result of
Duffus, Gould, and Jacobson \cite{1981.HP.obs.claw.net}, which characterizes all the graphs which are HP-obstructions.

\begin{thm}[{\cite{1981.HP.obs.claw.net}, see also {\cite[Theorem 2.9]{claw-free.HC-obs}}}]
There are exactly two HP-obstructions: the claw and the net.
\end{thm}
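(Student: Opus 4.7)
The plan is to prove the theorem by first verifying that the two claimed obstructions really are HP-obstructions and then showing that there can be no others. That the claw and the net are HP-obstructions is straightforward: both are connected; the claw has no Hamiltonian path since a path has at most two endpoints yet any Hamiltonian path of $K_{1,3}$ would force three degree-$1$ vertices to be endpoints, and the net fails similarly because it has three pendant vertices that would all have to be path endpoints. Deleting any single vertex of either graph yields something either disconnected or small enough to display a Hamiltonian path explicitly.

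For the converse, I would make the following elementary reduction. Let $G$ be an HP-obstruction distinct from the claw and the net. If $G$ contained an induced claw, then that induced claw would be a connected proper induced subgraph with no Hamiltonian path, contradicting minimality; hence $G$ is claw-free. Exactly the same argument shows $G$ is net-free. Therefore the whole theorem reduces to the main lemma: every connected $\{K_{1,3}, N\}$-free graph has a Hamiltonian path.

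To prove this main lemma, I would use the classical longest-path argument. Take a longest path $P = v_0 v_1 \cdots v_k$ in $G$ and assume for contradiction that some vertex of $G$ lies outside $V(P)$. By connectivity there is a vertex $w \notin V(P)$ with a neighbor on $P$; let $i$ be minimal with $w v_i \in E(G)$, and note $i \geq 1$ since otherwise $w v_0 v_1 \cdots v_k$ is longer. Applying claw-freeness at $v_i$ to the neighbors $v_{i-1}, v_{i+1}, w$ forces at least one of the edges $v_{i-1}w$, $v_{i+1}w$, $v_{i-1}v_{i+1}$; minimality of $i$ excludes the first, and if $v_{i+1}w$ is an edge then splicing $w$ between $v_i$ and $v_{i+1}$ gives a longer path. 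So the edge $v_{i-1}v_{i+1}$ must be present, with $v_{i+1}w$ absent.

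The main obstacle is this remaining case, where the claw-free condition alone does not yield an immediate contradiction. Here I would exploit net-freeness: using the chord $v_{i-1}v_{i+1}$ I can reroute $P$ to isolate $v_i$ together with $w$ and examine the induced subgraph on $\{w, v_i, v_{i-1}, v_{i+1}, v_{i-2}, v_{i+2}\}$ (when these exist), aiming either to produce a longer path by inserting $w$ and possibly shifting $v_i$, or to exhibit an induced net—whose three pendants would correspond to $w$, a leftover vertex on one side of $v_i$, and a leftover vertex on the other side. Several boundary cases must be handled separately ($i=1$, $i=k-1$, or $P$ being short), along with the possibility that $w$ has further neighbors on $P$; I expect these to reduce by symmetric or iterated versions of the same local analysis. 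Once all such configurations are ruled out, the contradiction is complete and $P$ must be Hamiltonian.
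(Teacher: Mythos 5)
The paper does not prove this statement; it is quoted from Duffus, Gould, and Jacobson (with a proof also available as Theorem 2.9 of Shepherd's paper), so there is no in-paper proof to compare against. Your overall strategy is the standard one: verify that the claw and the net are HP-obstructions, observe that any other HP-obstruction would have to be $\{K_{1,3}, N\}$-free by minimality, and reduce everything to the lemma that every connected $\{K_{1,3}, N\}$-free graph is traceable. The opening moves of your longest-path argument are also correct: with $i$ minimal such that $wv_i \in E(G)$, minimality kills $wv_{i-1}$, an insertion kills $wv_{i+1}$, and claw-freeness at $v_i$ then forces the chord $v_{i-1}v_{i+1}$.

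The genuine gap is that the entire substance of the theorem lives in the case you defer. Having forced $v_{i-1}v_{i+1} \in E(G)$, you must show that the configuration on $\{w, v_{i-2}, v_{i-1}, v_i, v_{i+1}, v_{i+2}\}$ yields an induced net, an induced claw, or a longer path, and this is not a routine check: each of the seven or so adjacencies that could spoil the candidate net (triangle $v_{i-1}v_iv_{i+1}$ with pendants $v_{i-2}, w, v_{i+2}$) spawns its own subcase, and several of them do not resolve in one step. For instance, if $v_{i-2}v_{i+2} \in E(G)$, no single rerouting of $P$ produces a longer path and no claw or net is immediately visible; one has to apply claw-freeness again at $v_{i-2}$ or $v_{i+2}$ and iterate, possibly after passing to a longest path chosen with an additional extremal property. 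Your sentence ``I expect these to reduce by symmetric or iterated versions of the same local analysis'' is precisely where the proof of this theorem actually is, and asserting that the cases work out is not the same as working them out (the published proofs devote one to two pages to exactly this analysis). Until that case analysis is completed, the main lemma, and hence the converse direction of the theorem, is unproved.
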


In 1998, Brousek \cite{brousek.claw-free} obtained a complete characterization of the claw-free HC-obstructions.
On a similar note, Chiba and Furuya \cite{chiba.furuya.not.only.minimal}, characterized all (not only the minimal) 2-connected non-Hamiltonian
$\{K_{1,3}, N_{3,1,1}\}$-free graphs.
Cheriyan, Hajebi, and two of us \cite{2022.triangle-free.split-graphs.HC.obs} characterized all HC-obstructions which are split graphs and all
HC-obstructions which are triangle-free graphs.

\begin{thm}[\cite{2022.triangle-free.split-graphs.HC.obs}]
The snare and all n-novae for $n\geq 2$ are HC-obstructions. Moreover, these are the only HC-obstructions which are split graphs.
\end{thm}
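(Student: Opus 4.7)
The plan is to prove the statement in two parts: that the snare and each $n$-nova ($n \geq 2$) is an HC-obstruction, and that these exhaust the split HC-obstructions. The forward direction is a direct verification. For each candidate graph $G$ one checks that $G$ is 2-connected (no single vertex is a cut-vertex, which in these graphs follows from the existence of two internally-disjoint paths between any pair of vertices routed through the clique portion), non-Hamiltonian (via a counting/cut argument exploiting that the graph contains an independent set so large that any putative Hamiltonian cycle would be forced to re-enter the clique too often), and minimal (a short orbit-based case analysis: deleting any vertex either restores a Hamiltonian cycle or destroys 2-connectivity).

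For the characterization, let $G$ be a split HC-obstruction with $V(G) = K \cup I$ where $K$ is a maximum clique and $I$ is an independent set. The main tool I would use is a Hall-type description of Hamiltonicity specific to split graphs: any Hamiltonian cycle visits each $v \in I$ by consuming an edge on each side in $K$, so Hamiltonicity is equivalent to a marriage condition between $I$ and the edges of $K$ incident to $N_{K}(I)$. Non-Hamiltonicity therefore yields a deficient subset $S \subseteq I$, and 2-connectivity forces $|N_{K}(v)| \geq 2$ for every $v \in I$. Choosing $S$ to be a minimum deficient set, I would examine the induced subgraph on $S \cup N_{K}(S)$ and argue that it already has the shape of either a snare or an $n$-nova; minimality of $G$ as an HC-obstruction then forces $G = G[S \cup N_{K}(S)]$, since any vertex outside this set could be deleted to produce a smaller 2-connected non-Hamiltonian induced subgraph.

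The main obstacle will be this deficient-set analysis. The Hall-type deficiency by itself permits many bipartite patterns between $I$ and $K$, and pinning these down to exactly the snare and the $n$-nova families requires careful use of minimality. Concretely, I would show that a vertex $v \in I$ whose neighborhood in $K$ is ``too large'' can be deleted while preserving 2-connectivity and non-Hamiltonicity, contradicting minimality and rigidifying the neighborhood pattern of each vertex in $I$; analogous arguments should eliminate superfluous vertices of $K \setminus N_{K}(S)$. The most delicate step, and the one likely requiring the most care, will be distinguishing the snare from the $n$-nova family at the end and confirming that no further ``mixed'' configurations can arise under 2-connectivity and minimality simultaneously.
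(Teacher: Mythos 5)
You should first be aware that the paper you are working from does not prove this statement: it is quoted from the earlier paper of Cheriyan, Hajebi, Qu and Spirkl, and the present paper explicitly declines even to define the snare and the $n$-novae. So there is no in-paper proof to compare against; your proposal must stand on its own. It does not, for two reasons. First, you never define the snare or the $n$-novae either, so the forward direction (``a direct verification'') cannot be checked as written; the minimality check in particular (``deleting any vertex either restores a Hamiltonian cycle or destroys 2-connectivity'') is exactly the kind of case analysis that has to be exhibited, not announced, especially since it must be done uniformly in $n$ for the whole $n$-nova family.

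Second, and more seriously, the characterization half rests on a key lemma that is false. You assert that for a split graph with clique $K$ and independent set $I$, Hamiltonicity \emph{is equivalent to} a marriage condition between $I$ and the edges of $K$, and you then extract a deficient set $S \subseteq I$ from non-Hamiltonicity (``Non-Hamiltonicity therefore yields a deficient subset''). Only one implication of any such statement holds: Hall-type conditions (and the sharper Burkard--Hammer conditions) are necessary for a split graph to be Hamiltonian but not sufficient, and indeed the Hamiltonian cycle problem is NP-complete on split graphs, so no polynomially checkable marriage condition can be equivalent to it. Hence a non-Hamiltonian split graph need not admit a deficient set at all, and the deficient-set machinery on which your entire classification is built cannot get started. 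Even granting a deficient set $S$, the step ``examine $G[S \cup N_K(S)]$ and argue that it already has the shape of either a snare or an $n$-nova'' is the theorem restated rather than an argument, and your own closing paragraph concedes that the elimination of ``mixed'' configurations --- which is where all the content lies --- is left open. To repair this you would need to replace the false equivalence with an honest structural analysis of a minimal 2-connected non-Hamiltonian split graph (for instance, analyzing how a longest cycle meets $I$ and using minimality to delete vertices off that cycle), which is a genuinely different and substantially longer argument than the one sketched.
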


\begin{thm}[\cite{2022.triangle-free.split-graphs.HC.obs}] \label{thm:triangle-free}
All thetas, triangle-free closed thetas, and triangle-free wheels are HC-obstructions, and they are the only HC-obstructions which are triangle-free.
\end{thm}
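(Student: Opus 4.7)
The statement has two directions: sufficiency (each listed graph is an HC-obstruction) and necessity (every triangle-free HC-obstruction is one of them). I will focus the plan on necessity, since sufficiency is fairly routine.

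\textbf{Sufficiency.} Each of thetas, triangle-free closed thetas, and triangle-free wheels is $2$-connected by construction. Non-Hamiltonicity is easy in each case: in a theta with three internally disjoint $u$-$w$ paths each of length $\ge 2$, any Hamilton cycle would use two of $u$'s edges and two of $w$'s edges, hence enter only two of the three paths and miss the internal vertices of the third; in a triangle-free wheel with hub $v$ and rim $C$, a Hamilton cycle would use two edges $vx_i, vx_j$ and force the remaining $x_i$-$x_j$ path to traverse all of $C$, making $x_i, x_j$ adjacent on $C$ and producing a forbidden triangle $vx_ix_j$. Closed thetas are handled analogously. For minimality, deleting a branch vertex leaves an adjacent branch vertex as a cut vertex; deleting a degree-$2$ vertex internal to a branch either creates a cut vertex (when the branch has length $\ge 3$) or deletes an entire branch and yields a strictly smaller $2$-connected graph which one checks admits a Hamilton cycle directly.

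\textbf{Necessity, easy case.} Let $G$ be a triangle-free HC-obstruction. Suppose first that some vertex $v \in V(G)$ has $G \setminus v$ still $2$-connected. Then by the minimality of $G$, the graph $G \setminus v$ is Hamiltonian; fix a Hamilton cycle $C$. Since $G$ is $2$-connected, $v$ has at least two neighbors on $C$. If $v$ has exactly two neighbors $x, y$ on $C$, then $C$ splits into two arcs between $x$ and $y$, which together with the path $xvy$ form three internally disjoint $xy$-paths in $G$; triangle-freeness rules out $xy \in E(G)$, forcing both arcs to have length $\ge 2$, so $G$ is a theta. If $v$ has at least three neighbors on $C$, then $G$ is a wheel with hub $v$ and rim $C$, and triangle-freeness forces the neighbors of $v$ on $C$ to be pairwise non-consecutive, yielding a triangle-free wheel.

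\textbf{Necessity, hard case.} It remains to handle the situation where $G \setminus v$ fails to be $2$-connected for every $v \in V(G)$, so every vertex of $G$ lies in some $2$-cutset. The plan is to analyze $G$ through its collection of $2$-cutsets. Using minimality together with the triangle-free and non-Hamiltonian hypotheses, one argues that $G$ is a subdivision of a small multigraph $H$ on its branch vertices (those of degree $\ge 3$), where every branch has length $\ge 2$ by triangle-freeness. If $|V(H)| = 2$ then $H$ is a bundle of $\ge 3$ parallel edges and $G$ is already a theta. Otherwise one performs a case analysis on $H$: one rules out all subdivided multigraphs that either admit a Hamilton cycle for some choice of branch lengths or fail the minimality condition. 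The main obstacle of the proof is this last step, since many small cubic multigraphs do yield Hamiltonian subdivisions, and both non-Hamiltonicity and the precise minimality must be verified simultaneously; the expectation is that only those $H$ and branch-length patterns underlying a triangle-free closed theta survive, which is the structural core of the argument.
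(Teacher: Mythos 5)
A preliminary remark: this theorem appears in the paper only as a citation of \cite{2022.triangle-free.split-graphs.HC.obs}; the paper contains no proof of it, so there is no internal argument to compare yours against, and I can only judge the proposal on its own terms. Your sufficiency sketch is essentially fine, but both halves of the necessity argument have genuine gaps. In the ``easy case'' you conclude from a Hamiltonian cycle $C$ of $G\setminus v$ that $G$ \emph{is} a theta (two neighbours of $v$) or a wheel with rim $C$ (at least three). This ignores chords of $C$: the three $(x,y)$-paths you exhibit form a spanning subgraph of $G$, not necessarily an induced one, and $G$ need not equal their union. Concretely, let $G$ be the triangle-free wheel whose rim is the six-cycle $x_1u_1x_2u_2x_3u_3$ and whose hub $h$ is adjacent to $x_1,x_2,x_3$. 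Then $G\setminus u_1$ is $2$-connected and Hamiltonian (a six-cycle plus the chord $hx_3$), and $u_1$ has exactly two neighbours $x_1,x_2$, so your argument declares $G$ a theta; but $G$ is a wheel, and the edge $hx_3$ joins internal vertices of two of your three paths. The conclusion of the two-neighbour subcase is therefore false as stated, and fixing it requires exactly the structural analysis you defer.

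The ``hard case'' is the core of the theorem and is not proved: ``one argues that $G$ is a subdivision of a small multigraph'' and ``the expectation is that only \dots\ survive'' are placeholders for the entire argument. The one concrete structural claim you make there is also wrong: triangle-freeness does not force every branch between vertices of degree at least three to have length at least two, since triangle-free graphs may have adjacent branch vertices (the Petersen graph is a triangle-free, $2$-connected, non-Hamiltonian cubic graph in which every branch has length one). Nor do you give any mechanism bounding the number of branch vertices, so the proposed case analysis over $H$ has no a priori finite scope. Both points must come from a careful use of minimality --- for instance by showing that a minimal triangle-free $2$-connected non-Hamiltonian graph contains one of the listed graphs as an \emph{induced} subgraph and hence equals it --- and that is precisely the part of the proof that is missing.
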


We remark that the analogous problem with respect to the induced minor relation has been fully resolved by Ding and Marshall \cite{ding-marshall}, 
who obtained a complete characterization of the minimal, with respect to the induced minor relation, 2-connected non-Hamiltonian graphs.

A {\em wheel} is a graph obtained from a cycle $C$ by adding a new vertex (not in $V(C)$) which has at least three neighbors in $V(C)$.
We remark that in several papers about wheels (for example, in \cite{structure.only.prism}), the cycle $C$ is required to have at least four vertices and thus the above definition
is non-standard: for example, a four-vertex complete graph is a wheel according to our definition.

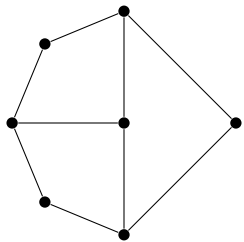
\begin{figure}[h]\centering
\begin{tikzpicture}
			\newdimen\R
			\R=1.75cm
			\node (h2) at (90:\R) {};
			\node (h5) at (270:\R) {};
			\node (ell) at (180:\R) {};
			\node (ell2) at (0:\R) {};
			\node (m1) at (135:\R) {};
			\node (m2) at (225:\R) {};
			\node (c) at (0,0) {};
			\draw (c) -- (h2);
			\draw (c) -- (h5);
			\draw (c) -- (ell);
			\draw (h2) -- (m1);
			\draw (m1) -- (ell);
			\draw (ell) -- (m2);
			\draw (m2) -- (h5);
			\draw (ell2) -- (h2);
			\draw (ell2) -- (h5);
			\end{tikzpicture}
\caption{An example of a wheel.}
\end{figure}

In this paper we give a complete characterization of the HC-obstructions which 
are wheel-free graphs. Since, by Theorem \ref{thm:triangle-free}, all triangle-free wheels are HC-obstructions, this is equivalent to characterizing all HC-obstructions which do not contain a wheel and contain a triangle. 

We proceed with some definitions in order to state our main result. Given two vertices, $u$ and $v$, a $(u,v)$-path is a path which has the vertices $u$ and $v$ as its ends. 
Given two sets of vertices, $A$ and $B$, an $(A,B)$-path is a $(u,v)$-path, where $u\in A$ and $v \in B$.

Let $G$ be a graph, let $K$ be the complete graph on $V(G)$, and let $F$ be a subset of $E(K)$ such that $F\cap E(G)=\emptyset$. 
We denote by $G + F$ the graph $(V(G), E(G)\cup F)$. Given a positive integer $k$, we denote by $[k]$ the set of positive integers $\{1,\ldots, k\}$.

Let $K$ and $L$ be two disjoint triangles on $\{k_{1},k_{2}, k_{3}\}$ and $\{l_{1}, l_{2}, l_{3}\}$ respectively.
For each $i \in [3]$ let $P_{i}$ be a $(k_{i}, l_{i})$-path, such that the paths $P_{1}, P_{2}, P_{3}$ are vertex-disjoint.
Let $P:= K \cup L \cup (\bigcup_{i\in [3]}P_{i})$. If for each $i \in [3]$ the path $P_{i}$ has length at least two, then $P$ is a {\em prism}, otherwise
$P$ is a {\em short prism}. 

Let $l$ be a vertex, such that $l \notin V(K)$. For each $i \in [3]$ let $P_{i}$ be a $(k_{i}, l)$-path of length at least two, 
such that the paths $P_{1}, P_{2}, P_{3}$ are internally vertex-disjoint. Then the graph $K \cup (\bigcup_{i\in [3]}P_{i})$ is  a {\em pyramid}.

Let $a$ and $b$ be two vertices. For each $i \in [3]$ let $P_{i}$ be an $(a, b)$-path of length at least two, such that the paths $P_{1}, P_{2}, P_{3}$ are internally vertex-disjoint.
The graph $\bigcup_{i\in [3]}P_{i}$ is  a {\em theta}.

		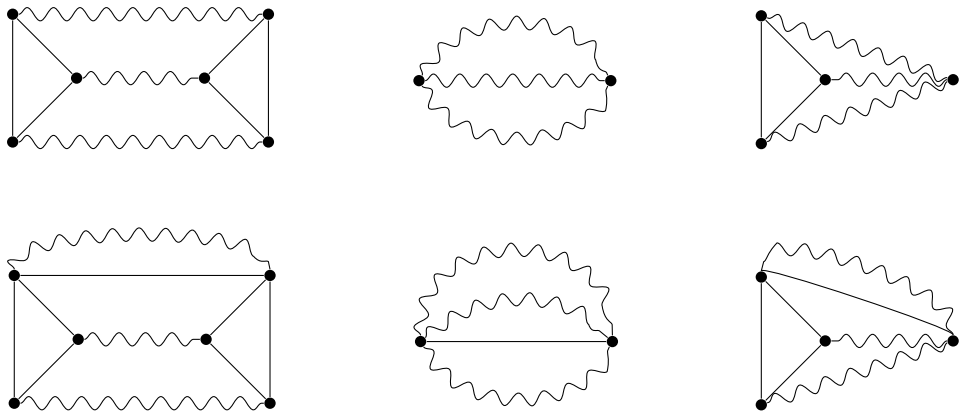
\begin{figure}[h!]\centering		
			\begin{tikzpicture}
			\node (k1) at (0,0) {};
			\node (k2) at (1,1) {};
			\node (k3) at (0,2) {};
			\node (s1) at (4,0) {};
			\node (s2) at (3,1)	{};
			\node (s3) at (4,2) {};
			
			\draw (k1) -- (k2);
			\draw (k2) -- (k3);	
			\draw (k1) -- (k3);
			\draw (s1) -- (s2);
			\draw (s2) -- (s3);	
			\draw (s1) -- (s3);	
			
			\draw[decorate, decoration=snake] (k3) -- (s3);
			\draw[decorate, decoration=snake] (k2) -- (s2);
			\draw[decorate, decoration=snake] (k1) -- (s1);
			\end{tikzpicture}
			\hfill
			\begin{tikzpicture}
		    \node (k1) at (0,1) {};
			\node (k2) at (3,1) {};
			\draw[decorate, decoration=snake] (k1) to[out=-60,in=-120] (k2);
			\draw[decorate, decoration=snake] (k1) to[out=60,in=120] (k2);
			\draw[decorate, decoration=snake] (k1) -- (k2);
		        
			\end{tikzpicture}
			\hfill
			\begin{tikzpicture}
			\node (k1) at (0,0) {};
			\node (k2) at (1,1) {};
			\node (k3) at (0,2) {};
			\node(v) at (3,1) {};
			\draw (k1) -- (k2);
			\draw (k2) -- (k3);	
			\draw (k1) -- (k3);
			\draw[decorate, decoration=snake] (v) -- (k1);
			\draw[decorate, decoration=snake] (v) -- (k2);
			\draw[decorate, decoration=snake] (v) -- (k3);
			\end{tikzpicture}
			
			\vspace{1cm}
			
			\begin{tikzpicture}
			\node (k1) at (0,0) {};
			\node (k2) at (1,1) {};
			\node (k3) at (0,2) {};
			\node (s1) at (4,0) {};
			\node (s2) at (3,1)	{};
			\node (s3) at (4,2) {};
			
			\draw (k1) -- (k2);
			\draw (k2) -- (k3);	
			\draw (k1) -- (k3);
			\draw (s1) -- (s2);
			\draw (s2) -- (s3);	
			\draw (s1) -- (s3);	
			
			\draw (k3) -- (s3);
			\draw[decorate, decoration=snake] (k2) -- (s2);
			\draw[decorate, decoration=snake] (k1) -- (s1);
			\draw[decorate, decoration=snake] (k3) .. controls +(up:0.75) and +(up:0.75) .. (s3);
			\end{tikzpicture}
			\hfill
			\begin{tikzpicture}
		    \node (k1) at (0,1) {};
			\node (k2) at (3,1) {};
			\draw[decorate, decoration=snake] (k1) to[out=-60,in=-120] (k2);
			\draw[decorate, decoration=snake] (k1) to[out=40,in=140] (k2);
			\draw (k1) -- (k2);
			
			\draw[decorate, decoration=snake] (k1) .. controls +(up:1.75) and +(up:1.75) .. (k2);
		        
			\end{tikzpicture}
			\hfill
			\begin{tikzpicture}
			\node (k1) at (0,0) {};
			\node (k2) at (1,1) {};
			\node (k3) at (0,2) {};
			\node(v) at (3,1) {};
			\draw (k1) -- (k2);
			\draw (k2) -- (k3);	
			\draw (k1) -- (k3);
			\draw (v) .. controls +(up:0.2) and +(up:0.2) ..  (k3);
			\draw[decorate, decoration=snake] (v) -- (k2);
			\draw[decorate, decoration=snake] (v) -- (k1);
			
			\draw[decorate, decoration=snake] (v) .. controls +(up:0.75) and +(up:0.75) .. (k3);
			\end{tikzpicture}

			\caption{Top row from left to right: A prism, a theta and a pyramid. Bottom row from left to right: 
			A prism$^{+}$, pyramid$^{+}$, theta$^{+}$. Squiggly edges represent paths of length at least two.}
			
		\end{figure}

A {\em prism$^{+}$/pyramid$^{+}$/theta$^{+}$ is a graph obtained from a prism/pyramid/theta}
by selecting one or more of the three paths involved in the definition
of the prism/pyramid/theta and adding for each of the selected paths an edge which joins its ends. (In the case of a theta$^+$, we add at most one edge between $a$ and $b$.)

A {\em 3-path-configuration} is a graph isomorphic to a prism, a pyramid, a theta, a prism$^{+}$, a pyramid$^{+}$ or a theta$^{+}$. We refer to 3-path-configuration as a
{\em 3PC}. Again, we note that our definitions differ from standard literature (such as \cite{structure.only.prism}): usually, only prisms, pyramids, and thetas are referred to as 3PCs, and usually short prisms, as well as pyramids with a path of length one, are included. Note that the latter are wheels according to our definition.

\medskip
In this paper we prove the following theorem:

\begin{theorem}
\label{th.main}
All 3PCs are HC-obstructions. 
Moreover, these are the only HC-obstructions which are wheel-free graphs.
\end{theorem}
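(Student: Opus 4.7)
\smallskip
\noindent\textbf{Proof proposal.} The plan is to establish the two directions separately: first, every 3PC is an HC-obstruction; second, every wheel-free HC-obstruction is a 3PC. For the first direction, let $G$ be a 3PC. 2-connectivity is immediate from the three-disjoint-path structure. For non-Hamiltonicity, I would observe that every internal vertex $v$ of each path $P_i$ has degree exactly $2$ in $G$ (none of the $+$-chords nor the triangle/apex edges are incident to internal path vertices), so any Hamiltonian cycle $H$ must contain both edges at $v$, and by iteration all edges of $P_1\cup P_2\cup P_3$ lie in $H$. In the theta, theta$^+$, pyramid, and pyramid$^+$ cases the apex or a branch vertex then already has three edges in $H$, contradicting that $H$ is a cycle. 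In the prism and prism$^+$ cases, each triangle vertex $k_i$ has exactly one $P_i$-edge and one further edge in $H$; a short combinatorial analysis (parity of the number of triangle edges used, together with tracking which $+$-chords appear) shows the resulting forced edge set decomposes into two or more vertex-disjoint cycles rather than one Hamiltonian cycle.

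For the minimality clause, let $G' = G\setminus S$ be a proper 2-connected induced subgraph of $G$; I would exhibit a Hamiltonian cycle in $G'$. The key tool is a \emph{propagation lemma}: since each internal vertex of $P_i$ has degree $2$ in $G$, if such a vertex survives in $G'$ then both of its $P_i$-neighbors survive (else it has degree $1$), and inductively the whole of $P_i$, endpoints included, lies in $G'$; moreover, if $P_i$ is preserved then its endpoints cannot be in $S$. Hence for each path, either $P_i\subseteq G'$ or all internal vertices of $P_i$ are in $S$. I would case-analyze on the number of ``destroyed'' paths together with the choices of which endpoints of destroyed paths remain in $G'$: zero destroyed paths forces $S=\emptyset$, while in each remaining case one writes down an explicit Hamiltonian cycle using the surviving paths, the triangle/apex edges, and whatever $+$-chords of the destroyed paths are present. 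The destruction of a path provides exactly the slack that broke the counting obstruction in the previous step.

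For the second direction, let $G$ be a wheel-free HC-obstruction. If $G$ is triangle-free then Theorem~\ref{thm:triangle-free} gives that $G$ is a theta, a triangle-free closed theta, or a triangle-free wheel; wheel-freeness rules out the last, and the first two are 3PCs (a theta and a theta$^+$, respectively). So we may assume $G$ contains a triangle $T=\{t_1,t_2,t_3\}$. Wheel-freeness then forces that no vertex of $V(G)\setminus T$ is adjacent to all three of $t_1,t_2,t_3$ (such a vertex with $T$ would induce $K_4$, which is a wheel), and more generally no induced cycle of $G$ admits an external vertex with three neighbors on it. Using 2-connectivity and Menger to locate three internally disjoint paths leaving $T$, and using HC-minimality (for suitable $v$ the graph $G\setminus v$ is 2-connected and hence Hamiltonian, and no such cycle can be extended through $v$) to constrain the global structure, I would argue that these three paths must all end at a second triangle (giving a prism), at a common apex (pyramid), or at a common pair of branch vertices (theta); the only wheel-free-compatible extra edges are $+$-chords joining the two ends of a single path, and HC-minimality forbids any additional vertices or edges, so $G$ is a 3PC.

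The main obstacle is the triangle-containing case of the second direction: turning the combinatorial constraints (wheel-free, 2-connected, HC-minimal) into the rigid prism/pyramid/theta skeleton requires careful path-tracing, and the most delicate substeps are simultaneously excluding chords between distinct paths (which would create wheel centers) and verifying that every surviving $+$-edge is compatible with both wheel-freeness and the HC-minimality of $G$.
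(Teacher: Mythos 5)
Your first direction is essentially the paper's own argument: the degree-two internal vertices of the three paths force all of $P_1\cup P_2\cup P_3$ into any Hamiltonian cycle, after which a degree count at the triangle (or apex/branch) vertices gives a contradiction; the minimality clause is a routine check, and your propagation lemma is a reasonable way to organize it. Your reduction of the second direction to the triangle-containing case via Theorem~\ref{thm:triangle-free} also matches a remark made in the paper.

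The genuine gap is the triangle-containing case of the second direction, which is the entire content of the theorem and which you only outline. The step ``these three paths must all end at a second triangle, at a common apex, or at a common pair of branch vertices, and the only compatible extra edges are $+$-chords'' is precisely what needs proving, and a Menger argument from one triangle together with HC-minimality does not deliver it: you must exclude additional branch vertices, chords within and between paths, and configurations with more than two ``hubs'', and none of these exclusions is routine. The paper instead proves the contrapositive --- every $2$-connected, wheel-free, 3PC-free graph is Hamiltonian --- by taking a minimum counterexample $G$, contracting degree-two edges and ``special edges'' to obtain a graph $G'$ with no clique cutset, and then invoking two nontrivial published structure theorems: the only-prism decomposition theorem of Diot, Radovanovi\'c, Trotignon and Vu\v{s}kovi\'c, which yields $G' = L(H)$ for a triangle-free chordless graph $H$, and the theorem that $2$-connected chordless graphs are $2$-sparse or admit a proper $2$-cutset. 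A further argument shows that $H$ has no $K_4$-minor, which ultimately forces $G'$ to be a short prism and produces a Hamiltonian cycle, a contradiction. Your sketch would in effect require reproving the relevant consequences of these structure theorems from scratch; as written it is a plan rather than a proof, as you acknowledge yourself.
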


\section{Wheel-free graphs}

In this section we prove \autoref{th.main}. In \cite{2022.triangle-free.split-graphs.HC.obs} it is proved that all thetas are HC-obstructions, by the exact same argument it follows that 
all thetas$^{+}$ are HC-obstructions.

\begin{lemma}[\cite{2022.triangle-free.split-graphs.HC.obs}]
\label{lem.obs.noHC.2conn.thetas}
All thetas and thetas$^{+}$ are HC-obstructions.
\end{lemma}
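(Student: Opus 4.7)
The plan is to verify, for a theta or theta$^{+}$ graph $G$ with branch vertices $a, b$ and three internally disjoint $(a,b)$-paths $P_1, P_2, P_3$ of length at least two (and, in the theta$^{+}$ case, possibly the chord $ab$), the three properties required of an HC-obstruction. Since the theta case is already handled in \cite{2022.triangle-free.split-graphs.HC.obs}, the main content is to check that the same reasoning adapts to the theta$^{+}$.

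For 2-connectedness, I would observe that deleting any single vertex leaves a connected subgraph: deleting an internal vertex of some $P_i$ preserves the other two paths joining $a$ and $b$ and leaves two pendant fragments of $P_i$ at $a$ and $b$, while deleting a branch vertex leaves three paths meeting at the other branch vertex (the chord, when present, only helps).

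To rule out a Hamiltonian cycle, I would exploit the fact that each internal vertex of a $P_i$ has degree exactly $2$ in $G$, with both neighbors on $P_i$, so any Hamiltonian cycle that visits such a vertex must traverse $P_i$ in full. Since the cycle uses exactly two edges at $a$, and each such edge is either a first edge of some $P_i$ or the chord $ab$, the cycle can include the internal vertices of at most two of the three paths (or at most one, if the chord is used). Hence some internal vertex of the remaining path is always missed.

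The main step is minimality: every proper 2-connected induced subgraph $H$ of $G$ is Hamiltonian. The key observation is the same degree count: if an internal vertex $u$ of $P_i$ lies in $V(H)$, then both of its neighbors on $P_i$ must also lie in $V(H)$, since otherwise $u$ would have degree at most $1$ in $H$. Propagating along $P_i$ forces $V(H)$ to contain either all internal vertices of $P_i$ or none; a slightly more careful version of the same argument also forces $a, b \in V(H)$. Therefore $H$ is obtained from $G$ by removing the internal vertices of a nonempty proper subfamily of $\{P_1,P_2,P_3\}$, and a short case check shows that the resulting $H$ is either a cycle (two paths kept), a cycle with chord $ab$ (two paths kept in the theta$^{+}$), or a cycle consisting of one path plus the chord (one path kept in the theta$^{+}$); all are Hamiltonian. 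The main obstacle is stating the degree-propagation step cleanly around the boundary vertices adjacent to $a$ or $b$; once that is done, the case analysis is essentially mechanical.
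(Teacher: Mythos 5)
Your proof is correct and follows essentially the same route as the paper, which defers to the cited reference but explicitly notes that the theta argument (forcing any Hamiltonian cycle, and any 2-connected induced subgraph, to traverse each path in full via its internal degree-two vertices) carries over verbatim to thetas$^{+}$; this is also exactly the argument the paper uses for the analogous lemma on prisms and pyramids. No gaps: the degree-propagation correctly forces $V(H)=\{a,b\}$ together with the internal vertices of a subfamily of the paths, and your case check covers all 2-connected possibilities.
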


\begin{lemma}
\label{lem.obs.noHC.2conn.prism.pyramid}
All prisms, pyramids, prisms$^{+}$, pyramids$^{+}$ are HC-obstructions.
\end{lemma}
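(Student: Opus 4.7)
The plan is to verify, for each of prisms, pyramids, prisms$^{+}$, and pyramids$^{+}$, the three defining properties of an HC-obstruction: 2-connectedness, non-Hamiltonicity, and minimality.

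Two-connectedness is essentially immediate: each such graph has three internally-disjoint paths between its two ``hubs'' (two triangles, or a triangle and an apex), so by Menger the graph is 2-connected; the optional chord edges of the $^{+}$ variants only add edges and hence preserve 2-connectedness.

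For non-Hamiltonicity, the key observation is that in every such 3PC each internal vertex of any $P_i$ has degree exactly $2$ in $G$; this remains true in the $^{+}$ variants because the chord edges are incident only to endpoints of the $P_i$. Hence any Hamiltonian cycle $H$ must contain every edge of every $P_i$. In a pyramid or pyramid$^{+}$ this immediately forces the apex $l$ to have degree at least three in $H$, contradicting that $H$ is a cycle. In a prism, each $k_i$ must use exactly one further edge in $H$, which necessarily lies in $K$; summing $H$-degrees at $k_1,k_2,k_3$ within $K$ gives $3$, but this sum equals twice the number of $K$-edges used, a parity contradiction. In a prism$^{+}$, if a chord $k_il_i$ lies in $H$ then $P_i$ together with $k_il_i$ already forms a closed cycle that exhausts the $H$-degrees at $k_i$ and $l_i$, so $H$ equals this small cycle and misses all other vertices; otherwise the plain-prism parity argument applies verbatim.

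For minimality, let $G'$ be a $2$-connected proper induced subgraph of $G$. Since every internal vertex of $P_i$ has degree $2$ in $G$, a propagation argument along each $P_i$ shows that for each $i$ either all internal vertices of $P_i$ lie in $G'$ or none do; in the former case the endpoints of $P_i$ also lie in $G'$. Since $G'\neq G$, some path, say $P_1$, contributes no internal to $G'$. A case analysis based on the status of $P_2,P_3$ and of the surviving endpoints of $P_1$ shows that, up to the symmetries of the configuration, $G'$ is either a single triangle, a cycle through the two intact paths, that cycle with one or two additional endpoints of $P_1$ attached by triangle edges, or---only in the $^{+}$ variants---a graph where a surviving chord $k_il_i$ or $k_il$ substitutes for a missing path; each of these admits an explicit Hamiltonian cycle, while the remaining configurations have a cut vertex (typically at $k_2$ or $l_2$, or at the apex) and are not $2$-connected. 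The main obstacle is keeping this case analysis organized, especially for the $^{+}$ variants; the helpful observation is that a chord can only aid Hamiltonicity, so any Hamiltonian cycle we produce for the plain prism or pyramid subgraph survives the addition of chord edges, and the few genuinely new $2$-connected subgraphs created by the chords are easily checked by exhibiting a Hamiltonian cycle directly.
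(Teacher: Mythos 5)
The proposal is correct and takes essentially the same approach as the paper: it forces every $P_i$ into any putative Hamiltonian cycle via the degree-two internal vertices and then derives a degree contradiction at the triangle $K$ (or at the apex), which is exactly the paper's argument for non-Hamiltonicity. The 2-connectedness and minimality checks, which the paper dismisses as routine, are fleshed out in somewhat more detail here but along the expected lines, and the sketched case analysis holds up.
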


\begin{proof}
The fact that none of these graphs has a cutvertex can be easily checked. Also it can be easily checked that 
for each of these graphs, each of its proper induced subgraphs is either not 2-connected or  Hamiltonian.
Let us prove that each of the graphs in the statement of the lemma is non-Hamiltonian.

Let $G$ be a prism/prism$^{+}$/pyramid/pyramid$^{+}$ as in the definition in Section \ref{sec.intro}. 
Let suppose towards a contradiction that $G$ has a Hamiltonian cycle $C$. Then since for each 
$i\in [3]$, the path $P_{i}$ has at least one internal vertex which has degree two in $G$, the cycle $C$ contains as subgraph the path $P_{i}$ and 
if $G$ has an edge which joins the ends of $P_{i}$, then clearly $C$ does not contain this edge. 
Since $C$ is $2$-regular, it follows that it contains at most one of the edges of $G[k_{1}, k_{2}, k_{3}]$. Now one of the vertices $k_{1}, k_{2}, k_{3}$
has degree one in $C$, which is a contradiction.
\end{proof}

By \autoref{lem.obs.noHC.2conn.thetas} and \autoref{lem.obs.noHC.2conn.prism.pyramid}, we get the following:

\begin{corollary}
\label{cor.3pc.are.HCobs}
All 3PCs are HC-obstructions.
\end{corollary}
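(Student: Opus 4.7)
The plan is to observe that Corollary \ref{cor.3pc.are.HCobs} is a purely definitional consequence of the two preceding lemmas. By the definition given in Section \ref{sec.intro}, a 3PC is a graph isomorphic to one of exactly six types: a prism, a pyramid, a theta, a prism$^{+}$, a pyramid$^{+}$, or a theta$^{+}$. I would therefore write the proof as a short case split over these six possibilities.

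First I would invoke \autoref{lem.obs.noHC.2conn.thetas} to handle the two types theta and theta$^{+}$, which covers those cases. Then I would invoke \autoref{lem.obs.noHC.2conn.prism.pyramid} to handle the remaining four types: prism, pyramid, prism$^{+}$, and pyramid$^{+}$. Since every 3PC falls into exactly one of these six categories, combining the two lemmas yields that every 3PC is an HC-obstruction.

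There is no real obstacle here: the corollary is a one-line consequence of the exhaustive enumeration of 3PC types together with the two lemmas. The only thing worth emphasizing in the write-up is that the six types listed above exhaust the definition of a 3PC, so the reduction to the two lemmas is complete. No additional verification (of 2-connectedness, non-Hamiltonicity, or minimality) is needed, since all of that work has already been carried out inside \autoref{lem.obs.noHC.2conn.thetas} and \autoref{lem.obs.noHC.2conn.prism.pyramid}.
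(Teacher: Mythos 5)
Your proposal is correct and matches the paper exactly: the corollary is stated as an immediate consequence of \autoref{lem.obs.noHC.2conn.thetas} and \autoref{lem.obs.noHC.2conn.prism.pyramid}, which together cover all six types in the definition of a 3PC. No further argument is needed.
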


\medskip
In view of \autoref{cor.3pc.are.HCobs}, 
in order to prove \autoref{th.main}, it suffices to prove the following:

\begin{theorem}
\label{th.main.reduction}
If $G$ is a 2-connected, wheel-free graph which has no induced subgraph isomorphic to a 3PC, then $G$ is Hamiltonian.
\end{theorem}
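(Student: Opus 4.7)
The plan is to argue by contradiction: suppose $G$ is a 2-connected, wheel-free graph with no induced 3PC that is non-Hamiltonian, and choose $G$ with $|V(G)|$ minimum. Since every induced subgraph of $G$ is also wheel-free and has no induced 3PC, minimality forces every proper 2-connected induced subgraph of $G$ to be Hamiltonian, so $G$ is in fact a wheel-free HC-obstruction with no induced 3PC. The first step is to dispose of the triangle-free case using \autoref{thm:triangle-free}: if $G$ is triangle-free then $G$ is a theta, a triangle-free closed theta, or a triangle-free wheel. But thetas and closed thetas are 3PCs (a closed theta is a theta$^{+}$), and a triangle-free wheel is a wheel, so each possibility contradicts our assumptions on $G$.

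So $G$ contains a triangle $T = \{a, b, c\}$, and $V(G) \setminus T \neq \emptyset$ (else $G$ is Hamiltonian). Wheel-freeness then forces every vertex of $V(G) \setminus T$ to have at most two neighbors in $T$, and more generally at most two neighbors on any induced cycle of $G$. I would analyze the components of $G \setminus T$: by 2-connectivity each component has at least two vertices of attachment in $T$. The aim is to locate three internally disjoint paths joining vertices of $T$ through $G \setminus T$ which, together with $T$, form one of the six types of 3PC. The shape of the 3PC found depends on how the paths meet outside $T$: a pyramid if they share a common terminus, a prism if they terminate on a second triangle, and a theta if two of them share both endpoints outside $T$. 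Menger-type and shortest-path arguments applied to $G \setminus T$ and its attachments to $T$ would be the natural tools to produce these three paths.

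The main obstacle is ensuring that the resulting 3-path configuration is \emph{induced}, and hence a genuine 3PC, rather than having chords. Here both hypotheses on $G$ must be exploited: wheel-freeness precludes chords that would create an induced cycle on which some vertex has three or more neighbors, while the absence of induced 3PCs, combined with the minimality of $G$, must handle the remaining chords. When chords obstruct the direct construction of a 3PC, the strategy is to identify a smaller 2-connected wheel-free graph with no induced 3PC (for instance, when a component of $G \setminus T$ attaches to only two vertices of $T$, by contracting that component together with those two attachments into a suitable smaller subgraph), apply the induction hypothesis to obtain a Hamiltonian cycle in this reduction, and lift it to a Hamiltonian cycle in $G$. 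The most delicate step is expected to be the case analysis on the attachment patterns of the components of $G \setminus T$ to $\{a,b,c\}$, since the argument for both constructing and certifying the induced 3PC (or extracting the reduction) varies across the different cases.
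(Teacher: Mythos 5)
Your proposal is an outline rather than a proof: the entire core of the argument is left at the level of intentions (``I would analyze\ldots'', ``the aim is to locate\ldots'', ``Menger-type arguments\ldots would be the natural tools'', ``the strategy is to identify\ldots''). The setup is fine --- a minimum counterexample is a wheel-free HC-obstruction with no induced 3PC, the triangle-free case is excluded by \autoref{thm:triangle-free}, each component of $G\setminus T$ attaches to at least two vertices of a triangle $T$, and no vertex outside $T$ has three neighbours in $T$. But from that point on nothing is actually proved. You never construct the three paths, never show that the configuration they form can be made induced, and never carry out the reduction-and-lift step; you yourself flag the two places where all the difficulty lives (certifying that the 3PC is induced, and the case analysis on attachment patterns) and then stop. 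A referee cannot accept ``wheel-freeness precludes some chords and minimality must handle the remaining chords'' as an argument: the chords between the three paths, and between the paths and $T$, are exactly what can turn your configuration into a perfectly Hamiltonian graph rather than a 3PC, and ruling them out is the theorem. The proposed reduction (``contracting that component together with those two attachments'') is also unjustified: contraction does not in general preserve the absence of induced wheels or 3PCs, and lifting a Hamiltonian cycle through such a contraction requires control over how the cycle traverses the contracted part, none of which is supplied. (The paper only contracts an edge both of whose ends have degree two, where both issues are checked explicitly.)

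For comparison, the paper does not localize the analysis at a single triangle at all. It first removes the pendant triangles hanging off clique cutsets of size two (the ``special edges''), observes that the resulting graph $G'$ is an only-prism graph with no clique cutset, and then imports the structure theorem of Diot, Radovanovi\'c, Trotignon, and Vu\v{s}kovi\'c to write $G' = L(H)$ for a triangle-free chordless graph $H$ with $\Delta(H)\le 3$. The rest of the proof is an analysis of $H$ using the decomposition of $2$-connected chordless graphs (2-sparse or proper 2-cutset) and the fact that $H$ is series-parallel, ending with $G'$ being a short prism and an explicit Hamiltonian cycle. This global, line-graph-based route is what substitutes for the chord-by-chord case analysis your plan defers; without some structural input of comparable strength, the local approach at $T$ is unlikely to close. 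As written, the proposal has a genuine gap and does not constitute a proof.
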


Below we state some notions and results that we need for the proof of \autoref{th.main.reduction}.

If $H$ is a graph, then the {\em line graph} of $H$ is the graph $G$ whose vertices are the edges of $H$ and two vertices of $G$ are adjacent if and only if
the corresponding edges of $H$ share a vertex. Given a graph $H$ we denote its line graph by $L(H)$.
The following is well known: 

\begin{lemma}
\label{lem.line.graph.subgraph}
Let $H$ be a graph and $J$ be a subgraph of $H$. Then $L(J)$ is an induced subgraph of $L(H)$.
\end{lemma}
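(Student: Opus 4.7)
The plan is a direct definition-chase. Since $V(L(J))=E(J)$ and $V(L(H))=E(H)$, the inclusion $E(J)\subseteq E(H)$ that comes with $J$ being a subgraph of $H$ shows that the vertex set of $L(J)$ is a subset of the vertex set of $L(H)$, so $L(J)$ is at least a candidate for being an induced subgraph of $L(H)$.

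The remaining task is to verify that adjacency in $L(J)$ matches adjacency in $L(H)$ restricted to $E(J)$. I would fix $e,f\in E(J)$ and note that by the definition of the line graph, $e$ and $f$ are adjacent in $L(J)$ exactly when they share an endpoint in $J$, and adjacent in $L(H)$ exactly when they share an endpoint in $H$. The forward direction is immediate from $V(J)\subseteq V(H)$: a common endpoint in $J$ is also a common endpoint in $H$. For the converse, if $e,f\in E(J)$ share an endpoint $v$ in $H$, then because $e,f\in E(J)$, all four of their endpoints already lie in $V(J)$; in particular $v\in V(J)$, so $e$ and $f$ already share $v$ in $J$.

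Combining these two steps yields $L(J)=L(H)[E(J)]$, which is exactly the statement that $L(J)$ is an induced subgraph of $L(H)$. There is no real obstacle here; the proof is just an unwinding of the definitions of line graph, subgraph, and induced subgraph. The only subtlety worth flagging is the (standard) convention that an edge carries its pair of endpoints with it when passing between $J$ and $H$, so that the phrase \emph{sharing an endpoint} is unambiguous across the two graphs.
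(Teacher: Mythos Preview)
Your proof is correct; it is precisely the standard definition-chase showing $L(J)=L(H)[E(J)]$. The paper does not actually prove this lemma---it merely states it as well known---so there is nothing further to compare.
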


Following the notation of \cite{structure.only.prism}, we say that a graph $G$ is an {\em only-prism} graph if it is (theta, wheel, pyramid)-free.
A {\em short pyramid} is a graph as in the definition of the pyramid in Section \ref{sec.intro} with the difference that there exists exactly 
one $i\in [3]$ such that $P_{i}$ has length one.
We remark that our definitions of a wheel and a pyramid are sligthly different from the corresponding definitions in \cite{structure.only.prism}.
In particular, in \cite{structure.only.prism} the complete graph on four vertices is not considered a wheel and a {\em short pyramid} is considered as a pyramid, 
whereas we consider it to be a wheel. Overall, however, only-prism graphs in our sense are also only-prism graphs as defined in 
\cite{structure.only.prism}.

Let $G$ be a graph and $C$ be a cycle which is a subgraph of $G$. 
A {\em chord} of $C$ is an edge $e$ such that both the ends of $e$ are vertices of $C$ and $e \notin E(C)$.
A cycle $C$ is {\em chordless} if $C$ has no chords and a graph $G$ is {\em chordless} if every cycle of $G$ is chordless.

\begin{theorem}[\cite{structure.only.prism}]
\label{th.only.prism.str}
If $G$ is an only-prism graph, then either $G$ is the line graph of a triangle-free chordless graph, or $G$ admits a clique cutset.
\end{theorem}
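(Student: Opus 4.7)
The plan is to establish this structure theorem by applying Beineke's forbidden-subgraph characterization of line graphs: a graph is a line graph if and only if it contains none of nine specific small graphs as an induced subgraph, with the claw $K_{1,3}$ being the principal one. First I would observe that several items in Beineke's list are themselves wheels (for example the $5$-wheel) and are therefore already excluded by the wheel-freeness of $G$. The work then reduces to showing, under the hypotheses that $G$ is only-prism and has no clique cutset, that the remaining Beineke obstructions — most importantly the claw — are also absent. Once $G$ is recognized as $L(H)$ for some graph $H$, a separate argument shows that $H$ must be triangle-free and chordless.

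The central case is the claw. Suppose $G$ contains an induced $K_{1,3}$ with center $v$ and leaves $a,b,c$. Since $G$ has no clique cutset, $v$ alone is not a cutvertex of $G$, so $G \setminus v$ contains induced paths between each pair of $\{a,b,c\}$; I would take such paths shortest possible. Analyzing how these three paths interact — whether any two share internal vertices, how $v$ attaches to them — the resulting union is either a theta (three internally disjoint paths between two branch vertices), a pyramid (triangle attached to a vertex by three paths), a wheel with hub $v$, or a prism. The first three are forbidden outright, so the genuinely delicate situation is when the three paths produce a prism. In that subcase I would use the minimality of the paths together with the absence of a clique cutset to locate a further attachment — forced by $2$-connectivity of $G$ around the prism — that, together with $v$, promotes the configuration to a theta, pyramid, or wheel.

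The other Beineke obstructions that are not already wheels typically contain an induced claw, so they reduce to the previous case; the small remaining exceptions admit a direct check that each contains a triangle with further structure amounting to a small pyramid or wheel on a bounded number of vertices. For the triangle-freeness of $H$: a triangle in $H$ would produce a triangle in $L(H)$, and $2$-connectivity of $G$ around this triangle supplies extra neighbors that together with the triangle form a wheel. For chordlessness of $H$: a chord in a cycle of $H$ yields two shorter cycles sharing a single edge, and the line-graph image of that configuration is either a theta or a wheel in $G$, contradicting only-prism-hood.

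The main obstacle is the prism subcase within the claw analysis. Because prisms are \emph{permitted} in only-prism graphs, the proof cannot simply ``find a $3$PC and stop'' — one must show that any prism arising from three paths between the leaves of a claw cannot sit isolated in $G$, and that the $2$-connectivity together with the absence of a clique cutset force an attachment which upgrades the prism to a wheel, pyramid, or theta. Controlling this extension step, and carefully managing how attachments to the prism interact with the claw's center $v$, is where I expect the bulk of the technical case analysis to lie.
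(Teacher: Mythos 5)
This statement is not proved in the paper at all: it is Theorem~2.7-style imported machinery, quoted verbatim from Diot, Radovanovi\'c, Trotignon, and Vu\v{s}kovi\'c (the reference \texttt{structure.only.prism}), where its proof occupies a substantial part of a long structural paper. So there is no internal proof to compare against, and your proposal has to be judged as a self-contained argument. As such, it is a plan rather than a proof, and the plan leaves exactly the hard parts unexecuted.

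Concretely: (1)~The claw analysis is the heart of the matter and is only gestured at. Taking shortest paths between the pairs of leaves $a,b,c$ in $G\setminus v$ does not by itself yield one of your four configurations; the three paths can overlap in complicated ways, the centre $v$ can have further neighbours on them, and extracting a theta/pyramid/wheel/prism from this data is a genuinely technical induced-subgraph argument (closer in spirit to three-in-a-tree than to a routine case check). (2)~The prism subcase --- where you must show that a prism arising from a claw cannot ``sit isolated'' and that 2-connectivity plus the absence of a clique cutset force an attachment upgrading it to a forbidden configuration --- is precisely the main decomposition content of the cited paper (the analysis of attachments to a prism), and you explicitly defer it. Note also that the theorem has no 2-connectivity hypothesis, so the ``$2$-connectivity of $G$ around the prism'' you invoke is not available; you only have the absence of a clique cutset. (3)~The remaining Beineke obstructions are dismissed with ``typically contain an induced claw'' and ``a direct check''; several of the nine are claw-free and each needs an actual verification against the only-prism hypothesis. (4)~The triangle-freeness and chordlessness of the root graph $H$ are likewise asserted, not proved: a triangle in $H$ gives a triangle in $L(H)$, which is perfectly allowed in an only-prism graph, so the claimed wheel must be extracted from additional structure that you have not exhibited. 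Until these four items are carried out in detail, the proposal does not constitute a proof of the theorem.
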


We will use the following definitions from \cite{structure.biconn.chordless}. 
A graph $G$ is {\em 2-sparse} if every edge of $G$ is incident to at least one vertex of degree at most two.
A {\em proper 2-cutset} of a connected graph $G$ is a pair of non-adjacent vertices $u,v$
such that $V(G)$ can be partitioned into non-empty sets $X, Y$ and $\{u,v\}$ such that there is no edge between $X$ and $Y$, 
and with the property that each of the graphs $G[X\cup\{u,v\}]$ and $G[Y\cup\{u,v\}]$ 
contains a $(u,v)$-path, and neither of the graphs $G[X\cup\{u,v\}]$ and $G[Y\cup\{u,v\}]$ is a path. We say that 
$(X,Y,u,v)$ is a {\em split} of this proper 2-cutset.

\begin{theorem}[\cite{structure.biconn.chordless, structure.chordless.first.result}]
\label{th.structure.biconn.chordless}
If $H$ is a 2-connected chordless graph, then either $H$ is 2-sparse or $H$ admits a proper 2-cutset.
\end{theorem}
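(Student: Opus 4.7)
The plan is to prove the contrapositive: assume $H$ is 2-connected, chordless, and admits no proper 2-cutset, and deduce that $H$ is 2-sparse. Pick any edge $e=uv$; I would show that at least one of $u,v$ has degree at most two. Suppose for contradiction that $d(u),d(v)\geq 3$, and write $N(u)\setminus\{v\}=\{u_1,\dots,u_k\}$ and $N(v)\setminus\{u\}=\{v_1,\dots,v_\ell\}$ with $k,\ell\geq 2$.

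First I would use chordlessness to pin down the local structure near $e$. If some $u_i$ were adjacent to some $v_j$, then $u,u_i,v_j,v$ is a 4-cycle and the edge $uv$ would be a chord, contradicting the chordless hypothesis. So $N(u)\setminus\{v\}$ and $N(v)\setminus\{u\}$ are completely non-adjacent. Similarly, if $u_i u_j\in E(H)$ then $u,u_i,u_j$ is a triangle; any further common neighbor of $u_i$ and $u_j$ would produce a 4-cycle with a chord, again forbidden. After these easy reductions, I would pass to a shortest cycle $C$ of $H$ that uses the edge $uv$. Since $H$ is chordless, $C$ is induced, and $C$ has length at least three; I would denote its two $(u,v)$-arcs by $P$ and $Q$, and pick a neighbor $u_1\in N(u)\setminus V(C)$ (which exists because $d(u)\geq 3$ and only two neighbors of $u$ lie on $C$).

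The main step is to promote $u_1$ and an analogously chosen $v_1\in N(v)\setminus V(C)$ into a proper 2-cutset. By 2-connectivity of $H$, in $H-u$ there is a path $R$ from $u_1$ to $v$, and by chordlessness the cycle formed by $R\cup\{uu_1,uv\}$ is induced — so $R$ avoids all interior vertices of $P$ and $Q$, forcing $R$ to reach $v$ through some $v_j$. A careful analysis of where the $u_i$'s and $v_j$'s attach back to $C$ — using at every step that any newly formed cycle must be induced — should identify two non-adjacent vertices $x,y$ (lying on $C$ or among the $u_i,v_j$) whose removal partitions the rest of $V(H)$ into two nonempty sides, each still containing an $(x,y)$-path, and such that the assumed $d(u),d(v)\geq 3$ forces enough branching on each side to prevent it from being a single path. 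This contradicts the assumption that $H$ has no proper 2-cutset, completing the argument.

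The hardest part will be the bookkeeping in the previous paragraph: namely, verifying the two subtle conditions in the definition of a proper 2-cutset, that the cut pair is non-adjacent and that neither side is merely an induced path. Non-adjacency should fall out of the chordless hypothesis applied to the cycle simultaneously containing $x$ and $y$; the ``not a path'' condition is what genuinely uses $d(u),d(v)\geq 3$, because the extra neighbors $u_i,v_j$ outside $C$ supply the branch vertices certifying that each side properly contains $G[X\cup\{x,y\}]$ or $G[Y\cup\{x,y\}]$ beyond a path. If a particular choice of $C$, $u_1$, or $v_1$ fails to yield such a pair, I would iterate with a different shortest cycle through $uv$, or shift to a neighboring bad edge; the inductive/extremal choice of $C$ to be shortest is what keeps this case analysis finite and, essentially, is the technical content of the two cited structure papers.
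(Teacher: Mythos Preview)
The paper does not prove this theorem; it is quoted from \cite{structure.biconn.chordless, structure.chordless.first.result} and used as a black box. So there is no ``paper's own proof'' to compare your proposal against.

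As for the proposal itself, it is a plan rather than a proof, and the plan has real gaps. First, a small setup issue: since $C$ is a cycle \emph{through the edge} $uv$, one of its two $(u,v)$-arcs is the single edge $uv$ and has no interior vertices, so speaking of ``interior vertices of $P$ and $Q$'' is misleading; only one arc is nontrivial. More seriously, the inference ``the cycle formed by $R\cup\{uu_1,uv\}$ is induced --- so $R$ avoids all interior vertices of $P$ and $Q$'' is simply false: chordlessness tells you that each cycle is an induced subgraph, but it does not prevent two induced cycles from sharing vertices. Nothing stops $R$ from reusing part of $C$.

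The central step --- producing an explicit proper $2$-cutset $(X,Y,x,y)$ from the hypothesis $d(u),d(v)\geq 3$ --- is exactly where the work lies, and you have deferred it entirely (``a careful analysis \ldots\ should identify two non-adjacent vertices $x,y$''). In the cited references the argument is not a local case analysis around a single edge; it proceeds via a structural decomposition (roughly, analysing how extra branches attach to an extremal cycle and showing that either you peel off a $2$-sparse piece or you locate a separating pair satisfying both the non-adjacency and the ``neither side is a path'' conditions). Your sketch does not yet supply a candidate pair $\{x,y\}$, let alone verify those two conditions, and the suggested fallback of ``iterating with a different shortest cycle'' has no termination argument. To turn this into a proof you would need, at minimum, an explicit construction of $x,y$ together with the partition $X,Y$, and a direct verification that $G[X\cup\{x,y\}]$ and $G[Y\cup\{x,y\}]$ are not paths.
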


Given a graph $H$, a {\em subdivision} of $H$ is a graph obtained from $H$ by replacing each
edge $uv$ of $H$ by a $(u,v)$-path of length at least one in such a way that none of 
the paths that we add has an internal vertex in $V(H)$ or in one of the other paths we add.

A \emph{model} of a graph $H$ in a graph $G$ is a collection $(A_h)_{h \in V(H)}$ of disjoint subsets of $V(G)$ such that $G[A_h]$ is connected for all 
$h \in V(H)$, and for every edge $e = hh' \in E(H)$, there is at least one edge between $A_h$ and $A_{h'}$ in $G$. 
We say that the graph $G$ contains $H$ \emph{as a minor} (or \emph{contains an $H$-minor}) if $G$ contains a model of $H$.
The following is well known (see, for example, \cite{diestel.2017.book}, 1.7).

\begin{lemma}
\label{lem.minor.degree3}
Let $G$ and $H$ be graphs such that $\Delta(H) \leq 3$ and $G$ contains $H$ as a minor. Then $G$ has a subdivision of $H$ as a subgraph.
\end{lemma}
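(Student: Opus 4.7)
The plan is to build an explicit subdivision of $H$ in $G$ directly from a model. First, fix a model $(A_h)_{h \in V(H)}$ of $H$ in $G$. For each edge $e = hh' \in E(H)$, select one concrete edge $f_e \in E(G)$ between $A_h$ and $A_{h'}$ and record its endpoints $u_e^h \in A_h$ and $u_e^{h'} \in A_{h'}$. For each $h \in V(H)$, set $U_h := \{u_e^h : e \in E(H),\ h \in e\}$; the hypothesis $\Delta(H) \leq 3$ gives $|U_h| \leq 3$.

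The core step is to connect up the vertices of $U_h$ inside each $A_h$ using a subtree $T_h$ of $G[A_h]$ which contains $U_h$ and is minimal with this property (such a tree exists because $G[A_h]$ is connected). Since every leaf of a minimal such tree must lie in $U_h$ and $|U_h| \leq 3$, one checks that $T_h$ has one of only a few shapes: a single vertex (if $|U_h| \leq 1$), a path between the two vertices of $U_h$ (if $|U_h| = 2$), or, when $|U_h| = 3$, either a path containing all three vertices of $U_h$ with one of them in the middle, or a "Y" consisting of three internally disjoint paths meeting at a single branching vertex $b_h \notin U_h$. In particular $T_h$ has at most one vertex of degree $\geq 3$, and such a vertex exists only when $d_H(h) = 3$.

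I would then define $T$ to be the subgraph of $G$ with $V(T) = \bigcup_h V(T_h)$ and $E(T) = \bigcup_h E(T_h) \cup \{f_e : e \in E(H)\}$, and argue that $T$ is a subdivision of $H$. For each $h$ pick a distinguished vertex $v_h \in V(T_h)$: when $d_H(h) = 3$, take $v_h$ to be the unique branching vertex of $T_h$, namely the center $b_h$ in the Y case or the middle vertex of $U_h$ along the path in the path case; when $d_H(h) \leq 2$, pick $v_h$ arbitrarily in $V(T_h)$. For each edge $e = hh' \in E(H)$, concatenate the unique $(v_h, u_e^h)$-path in $T_h$, the edge $f_e$, and the unique $(u_e^{h'}, v_{h'})$-path in $T_{h'}$ to form a $(v_h, v_{h'})$-path $Q_e$ in $T$. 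Disjointness of the sets $A_h$ ensures that the $Q_e$ are pairwise internally disjoint and that their internal vertices have degree $2$ in $T$, while each $v_h$ has degree exactly $d_H(h)$ in $T$. Hence $T$ is obtained from $H$ by replacing each edge $e$ with the path $Q_e$, which is precisely a subdivision of $H$.

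The main obstacle is a small degree bookkeeping in the case $d_H(h) = 3$ with $T_h$ a path: here the "branching vertex" is the middle element of $U_h$ itself, and one must verify that its two $T_h$-edges together with the single crossing edge $f_e$ produce degree $3$, while the other two vertices of $U_h$ produce degree $2$ and hence function as ordinary subdivision-internal vertices. This is precisely the point at which the bound $\Delta(H) \leq 3$ (rather than some larger degree bound) is used: if $h$ had four neighbors in $H$, a minimal spanning subtree of a 4-element $U_h$ could have two distinct branching vertices, and no single vertex of $T_h$ could then serve as the image of $h$ under the subdivision.
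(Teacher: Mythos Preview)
The paper does not supply its own proof of this lemma; it is stated as well known with a reference to Diestel's textbook (Section~1.7). Your argument is precisely the standard proof given there: take a model, replace each branch set $A_h$ by a minimal subtree spanning the attachment points $U_h$, use $\Delta(H)\le 3$ to conclude that each such subtree has at most one vertex of degree $\ge 3$, and read off the subdivision by designating that vertex (or any vertex, in the low-degree case) as the image of $h$.

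One small case you skipped is when two distinct edges $e,e'$ of $H$ incident to $h$ happen to select the same attachment point, so that $|U_h|<d_H(h)$; then your description of $T_h$ (``path'' or ``Y'') and of its ``unique branching vertex'' no longer literally applies. This is easily patched --- the coinciding vertex itself serves as $v_h$ --- and does not affect the validity of the argument.
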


\begin{lemma}[\cite{duffin1965topology}]
 \label{lem:sp}
Let $G$ be a graph which does not contain a $K_4$-minor. Then $G$ contains a vertex of degree at most two. 
\end{lemma}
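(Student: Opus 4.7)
The plan is to prove the contrapositive: if $\delta(G) \geq 3$, then $G$ contains a $K_4$-minor. By \autoref{lem.minor.degree3} applied with $H = K_4$ (which has $\Delta(K_4) = 3$), this is equivalent to exhibiting a subdivision of $K_4$ as a subgraph of $G$.

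I would proceed by induction on $|V(G)|$. The base case is $|V(G)| \leq 4$: here $\delta(G) \geq 3$ forces $G = K_4$, which is a $K_4$-subdivision trivially. For the inductive step, I would aim to reduce $G$ to a smaller graph that still satisfies $\delta \geq 3$ and apply the inductive hypothesis; any $K_4$-minor in the smaller graph lifts to a $K_4$-minor in $G$. To avoid issues with non-2-connected graphs, I would first pass to a leaf block $B$ of the block decomposition of $G$: every vertex of $B$ other than the (unique, if any) cutvertex attached to $B$ inherits its degree from $G$, so $B$ is 2-connected with at most one vertex of degree possibly $2$ (which itself still has degree $\geq 2$ by 2-connectivity); applying the argument inside such a block suffices, as a $K_4$-minor in $B$ is a $K_4$-minor in $G$.

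Assuming now $G$ is 2-connected, fix a vertex $v$ of minimum degree. If $d(v) = 3$ with neighbors $u_1, u_2, u_3$, and if some pair of these neighbors is adjacent (say $u_1 u_2 \in E(G)$), I would contract the edge $v u_3$; the merged vertex inherits $u_1$, $u_2$ and all other neighbors of $u_3$ (of which there are at least two by $\delta \geq 3$), and one checks case-by-case --- splitting on whether $u_1, u_2 \in N(u_3)$ --- that either the merged vertex already has degree $\geq 3$ (and induction applies) or else $\{v, u_1, u_2, u_3\}$ together with $N(u_3)$ already contains the four branch vertices of a $K_4$-subdivision. If instead $u_1, u_2, u_3$ are pairwise non-adjacent, I would use the 2-connectivity of $G$ and a Menger-type argument inside $G - v$ to produce three internally disjoint paths pairwise connecting $u_1, u_2, u_3$ in $G - v$; together with $v$ and the edges $v u_i$, these three paths constitute a $K_4$-subdivision with branch vertices $\{v, u_1, u_2, u_3\}$. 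The remaining regime $\delta(G) \geq 4$ is handled by deleting a carefully chosen edge whose endpoints both have degree at least $4$, which preserves $\delta \geq 3$ and allows induction on $|E(G)|$.

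The main obstacle is the case where $v$ has three pairwise non-adjacent neighbors: producing the three required internally disjoint paths in $G - v$ demands careful connectivity analysis, since $G - v$ is only guaranteed to be connected (not 2-connected), and the $u_i$ might lie in complicated relative positions. An alternative, cleaner route to this step is to argue by contradiction on a minimum counterexample: if such paths failed to exist, some small vertex cut would separate the $u_i$, and that cut together with $v$ would yield a proper minor of $G$ to which induction applies, producing a $K_4$-minor and thus a contradiction.
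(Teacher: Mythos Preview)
The paper does not prove this lemma; it is quoted from Duffin with a citation and used as a black box. So there is no paper proof to compare against, and your attempt must stand on its own.

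Your strategy --- prove the contrapositive by exhibiting a $K_4$-subdivision whenever $\delta(G)\ge 3$ --- is the right one, and the result is classical. But the sketch has genuine gaps. In your contraction case you check only that the \emph{merged} vertex retains degree at least~$3$; however, if (say) $u_1$ is adjacent to both $v$ and $u_3$ with $d_G(u_1)=3$, then $u_1$ drops to degree~$2$ in $G/vu_3$ and induction fails, while the direct $K_4$-subdivision you promise would require a $u_2$--$u_3$ path in $G\setminus\{v,u_1\}$, which $2$-connectivity of $G$ alone does not guarantee. In your non-adjacent case, what you actually need is a cycle in $G-v$ through all of $u_1,u_2,u_3$; but $G-v$ is merely connected, and three prescribed vertices of a connected graph need not lie on a common cycle, so the ``Menger-type argument'' you invoke is precisely the hard step and is not supplied. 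Your fallback (minimum counterexample plus small cuts) is a restatement of the same difficulty, not a resolution of it. The leaf-block reduction is also not cleanly integrated: once $B$ has a cutvertex of degree~$2$, ``fix a vertex of minimum degree'' picks that cutvertex and the subsequent case split no longer applies. All of these gaps can be closed --- e.g.\ via a longest-path argument extracting a cycle with two crossing chords, or by quoting that $K_4$-minor-free graphs have treewidth at most~$2$ and hence a vertex of degree at most~$2$ --- but as written the proposal is a plan rather than a proof.
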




\bigskip
We are now ready to prove \autoref{th.main.reduction}.

\begin{proof}[Proof of \autoref{th.main.reduction}]
Let us assume towards a contradiction that the theorem does not hold. Let $G$ be a minimal, with respect to the number of its vertices, counterexample for the theorem.

\begin{claim}
\label{cl.counterex.reduction}
Let $\{u,v\} \in E(G)$. Then $d(u) \geq 3$ or $d(v) \geq 3$. 
\end{claim}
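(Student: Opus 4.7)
The plan is to argue by contradiction: suppose that there is an edge $uv \in E(G)$ with $d(u)=d(v)=2$, and seek a contradiction with the minimality of $G$. Let $u'$ be the neighbor of $u$ other than $v$, and $v'$ the neighbor of $v$ other than $u$. First I would handle the easy case $u'=v'$: then $\{u,v,u'\}$ spans a triangle; if $V(G) = \{u,v,u'\}$ then $G$ is a triangle, hence Hamiltonian, contradicting the choice of $G$; otherwise $u'$ is a cut vertex separating $\{u,v\}$ from the rest of $G$, contradicting $2$-connectedness. So we may assume $u' \neq v'$, and observe that $u'v \notin E(G)$ since the only neighbors of $v$ in $G$ are $u$ and $v'$.

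I would then define $G'$ by suppressing the degree-$2$ vertex $u$: set $V(G') = V(G) \setminus \{u\}$ and $E(G') = (E(G) \setminus \{uu',uv\}) \cup \{u'v\}$; equivalently, $G$ is obtained from $G'$ by subdividing the edge $u'v$. The strategy is to show that $G'$ still satisfies the hypotheses of \autoref{th.main.reduction} (i.e.\ it has fewer vertices than $G$, and is $2$-connected, wheel-free, and 3PC-free), so that the minimality of $G$ will force $G'$ to be Hamiltonian. Since $v$ still has degree $2$ in $G'$ (with neighbors $u'$ and $v'$), every Hamiltonian cycle of $G'$ must contain the edge $u'v$, and replacing this edge by the path $u'-u-v$ yields a Hamiltonian cycle of $G$ - the desired contradiction. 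The $2$-connectedness of $G'$ is immediate: two internally disjoint paths in $G$ between vertices of $V(G')$ remain internally disjoint in $G'$ after contracting any occurrence of $u'-u-v$ to the edge $u'v$.

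The main obstacle is verifying that $G'$ is wheel-free and 3PC-free. Let $H$ be an induced wheel or induced 3PC in $G'$. Since $E(G') \setminus E(G) = \{u'v\}$, if $\{u',v\} \not\subseteq V(H)$ then $G[V(H)] = G'[V(H)] = H$ is also an induced wheel or 3PC in $G$, contradicting our hypotheses on $G$. Hence $u', v \in V(H)$, and by inducedness $u'v \in E(H)$. I would then do a case analysis on the position of $u'v$ inside $H$ to exhibit an induced wheel or 3PC in $G[V(H) \cup \{u\}]$, which equals $H$ except that the edge $u'v$ has been replaced by the path $u'-u-v$; no unwanted chords can appear because the only neighbors of $u$ in $G$ are $u'$ and $v$, and $G[V(H)]$ is $H$ with the single edge $u'v$ removed. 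If $u'v$ lies on one of the three main paths of $H$, or on the rim cycle of a wheel, the replacement immediately yields a wheel or 3PC of the same type. The harder sub-cases arise when $u'v$ is a spoke of a wheel, an edge of the triangle $K$ or $L$ of a prism, pyramid, prism$^{+}$, or pyramid$^{+}$, or one of the ``$+$'' edges of a prism$^{+}$, pyramid$^{+}$, or theta$^{+}$. For instance, when $u'v$ is an edge of the triangle $K$ in a pyramid $H$ with $u'=k_1$ and $v=k_2$, the three internally disjoint paths $k_1-u-k_2$, $k_1-k_3-k_2$, and $k_1-P_1-\ell-P_2-k_2$, each of length at least $2$, form an induced theta in $G$; when $u'v$ is a spoke of a wheel, one splits further on the number of spokes and on the structure of the rim to obtain an induced wheel, theta, pyramid, or short pyramid (the last of which is itself a wheel). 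Completing this case analysis in every sub-case produces a wheel or 3PC in $G$, contradicting the hypotheses on $G$ and establishing the claim.
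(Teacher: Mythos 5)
Your strategy is the same as the paper's: assuming $d(u)=d(v)=2$, you suppress the degree-two pair to obtain a smaller graph (the paper contracts the edge $\{u,v\}$, which yields an isomorphic result), show it still satisfies the hypotheses of \autoref{th.main.reduction}, and then lift a Hamiltonian cycle back to $G$. Your treatment of $2$-connectedness and non-Hamiltonicity of $G'$ is correct, and your explicit disposal of the case $u'=v'$ is a sensible precaution that the paper leaves implicit.

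The weak point is the verification that $G'$ is wheel-free and 3PC-free, which you reduce to a case analysis ``on the position of $u'v$ inside $H$'' and do not complete; as written, the bulk of the work is deferred to sub-cases you only gesture at. The observation that collapses all of this --- and the one the paper uses --- is that $v$ has degree two in $G'$, and every wheel and every 3PC is $2$-connected, so $v$ has degree exactly two in any induced wheel or 3PC $H$ of $G'$ that contains it. Consequently the new edge $u'v$ is incident to a degree-two vertex of $H$, and every one of your ``harder sub-cases'' is vacuous: a spoke of a wheel, an edge of a triangle $K$ or $L$, and a ``$+$'' edge all have both endpoints of degree at least three in $H$. What remains is only the case where $v$ is an internal vertex of one of the three defining paths (or a non-hub rim vertex of a wheel), and there subdividing $u'v$ into $u'\hbox{--}u\hbox{--}v$ manifestly returns an induced copy of the same type of graph in $G$ (no chords appear since $N_G(u)=\{u',v\}$). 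I recommend you replace the case analysis with this degree argument; until you do, the proof is not complete, even though the overall approach is sound and matches the paper's.
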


\begin{proof}[Proof of \autoref{cl.counterex.reduction}]
Let $\{u,v\} \in E(G)$. Since $G$ is 2-connected it follows that $d(u)\geq 2$ and $d(v)\geq 2$. Let us suppose towards a contradiction that $d(u)=d(v)=2$
and let $u^{\prime}$ be the unique vertex of the set $N_{G}(u)\setminus \{v\}$ and let $v^{\prime}$ be the unique vertex of the set $N_{G}(v)\setminus \{u\}$.

Let $\tilde{G}$ be the graph obtained from $G$ by contracting the edge $\{u,v\}$, and let $v_{uv} \in V(\tilde{G})$ be the vertex formed by the contraction.

We claim that $\tilde{G}$ is 2-connected. Suppose not, and let $w\in V(\tilde{G})$ be a cutvertex of $\tilde{G}$. If $w \in V(\tilde{G})\setminus \{v_{uv}\}$, 
then $w$ is a cutvertex of $G$, which is a contradiction. Thus, $w = v_{uv}$ and hence both $v$ and $u$, are cutvertices of $G$ which is again a contradiction.

We claim that $\tilde{G}$ has no Hamiltonian cycle. Suppose not, and let $C$ be a Hamiltonian cycle in $\tilde{G}$.
Let $C^{\prime}:= C \setminus v_{uv}$ and let $C^{\prime \prime}: = (V(C^{\prime})\cup \{u,v\}, E(C^{\prime})\cup \{ \{u,u^{\prime}\}, \{u, v\}, \{v,v^{\prime}\} \})$.
Then, $C^{\prime \prime}$ is a Hamiltonian cycle in $G$, which is a contradiction. Thus $\tilde{G}$ has no Hamiltonian cycle.

We claim that $\tilde{G}$ has no induced subgraph which is isomorphic to a 3PC. Suppose not, and let 
$H$ be such an induced subgraph of $\tilde{G}$. Since by our assumptions $H$ is not an induced subgraph of $G$, it follows that $v_{uv} \in V(H)$. 
Since $v_{uv}$ has degree two in $\tilde{G}$, and since $H$ is 2-connected, we deduce that $v_{uv}$ has degree two in $H$. Thus, $v_{uv}$ is an internal vertex
of a path of length at least two of $H$.

Let $H^{\prime}:= H \setminus v_{uv}$ and let $H^{\prime \prime} := (V(H^{\prime})\cup \{u,v\}, E(H^{\prime})\cup \{ \{u,u^{\prime}\}, \{u, v\}, \{v,v^{\prime}\}\} )$.
Then, $H^{\prime \prime}$ is an induced subgraph of $G$ which is isomorphic to a 3PC,
which is a contradiction. Hence $\tilde{G}$ has no induced subgraph isomorphic to a 3PC.

By the above it follows that $\tilde{G}$ is a counterexample for \autoref{th.main.reduction}, which is a contradiction because
$|V(\tilde{G})| = |V(G)|-1 < |V(G)|$, and $G$ is a minimum counterexample.
\end{proof}

\begin{claim}
\label{cl.counterex.2cutset.one.small}
Let $\{u,v\}$ be a 2-cutset of $G$. Then there exist induced subgraphs of $G$, say $G_{1}, G_{2}$, 
such that $|V(G_{1})|, |V(G_{2})| < |V(G)|$, $V(G_{1}) \cap V(G_{2}) = \{u,v\}$, 
$V(G_{1}) \cup V(G_{2}) = G$, $G_{1}$ is a $(u,v)$-path of length two or a triangle and $G_{2}$ is neither a path nor a cycle.
\end{claim}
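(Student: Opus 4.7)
The plan is to show that any minimum counterexample $G$, when given a 2-cutset $\{u, v\}$, must contain a vertex $w$ with $N(w) = \{u, v\}$---equivalently, a singleton connected component of $G \setminus \{u, v\}$. Once such a $w$ is found, setting $G_1 := G[\{u, w, v\}]$ (which is a $(u, v)$-path of length two if $uv \notin E(G)$ and a triangle if $uv \in E(G)$) together with $G_2 := G \setminus w$ produces the required vertex-set intersection and union, with $|V(G_1)| = 3$ and $|V(G_2)| = |V(G)| - 1$, both strictly less than $|V(G)|$ since a 2-connected graph with a 2-cutset has at least four vertices.

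To produce such a $w$, let $C_1, \ldots, C_k$ ($k \geq 2$) be the components of $G \setminus \{u, v\}$; by 2-connectedness of $G$, each $C_j$ has a neighbor of both $u$ and $v$, so any singleton component $\{w\}$ automatically satisfies $N(w) = \{u, v\}$. Suppose for contradiction that $|C_j| \geq 2$ for every $j$. Partition the components as $X := C_1$ and $Y := C_2 \cup \cdots \cup C_k$, and construct the gadget graph $H$ obtained from $G[Y \cup \{u, v\}]$ by adding a new vertex $w^*$ adjacent only to $u$ and $v$, and symmetrically the gadget $H'$ obtained from $G[X \cup \{u, v\}]$. The hypotheses $|C_1| \geq 2$ and $\sum_{j \geq 2} |C_j| \geq 2$ ensure that both $H$ and $H'$ are strictly smaller than $G$. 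I would verify that each gadget is 2-connected, wheel-free, and 3PC-free: 2-connectedness uses the fact that every component of $G[Y \cup \{u, v\}] \setminus x$ (for $x \in Y$) contains $u$ or $v$, which follows from 2-connectedness of $G$; wheel- and 3PC-freeness is established by a lifting argument in which a forbidden induced subgraph of the gadget either avoids $w^*$ (and hence already lives in $G$) or contains $w^*$ as a degree-two vertex with neighbors $u, v$, in which case $w^*$ is an internal path-vertex of the 3PC or a cycle-vertex flanked by $u, v$ in the wheel, and replacing $w^*$ by an induced $(u, v)$-path of length at least two routed through the opposite side produces the same forbidden configuration in $G$ (such a path exists because each component is connected with neighbors of both $u$ and $v$, and should $uv \in E(G)$ one can avoid this edge when building the path).

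By minimality of $G$, both $H$ and $H'$ are Hamiltonian; since $w^*$ has degree two in each gadget, the Hamiltonian cycle uses both edges incident to $w^*$, so deleting $w^*$ yields Hamiltonian $(u, v)$-paths in $G[X \cup \{u, v\}]$ and $G[Y \cup \{u, v\}]$, and concatenating them produces a Hamiltonian cycle in $G$, contradicting that $G$ is a counterexample. Hence some component is a singleton $\{w\}$, and with $G_1, G_2$ defined as above, it remains to rule out $G_2$ being a path or a cycle. If $G_2$ were a path, 2-connectedness of $G$ forces $u, v$ to be its endpoints (otherwise some vertex strictly between $u$ and $v$ on the path would be a cut vertex of $G$), making $G$ a cycle and hence Hamiltonian---contradiction. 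If $G_2$ were a cycle $C$, then either $uv \in E(C)$ and rerouting $uv$ through $w$ yields a Hamiltonian cycle of $G$, or $uv \notin E(C)$ and $G$ is itself a theta (with the two $(u, v)$-arcs of $C$ and the length-two path through $w$), contradicting 3PC-freeness. The main obstacle is the lifting argument for wheel- and 3PC-freeness of the gadgets: when $uv \in E(G)$, the triangle $\{u, w^*, v\}$ in the gadget restricts which 3PCs and wheels $w^*$ can participate in, and the replacement path must be chosen to avoid the edge $uv$ in order to yield a genuinely induced forbidden subgraph of $G$.
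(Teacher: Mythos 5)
Your proposal is correct, and the overall strategy (use minimality to obtain Hamiltonian structures on the two sides of the cutset and glue them along $u,v$) is the same as the paper's, but the implementation is genuinely different. The paper first rules out three or more components of $G \setminus \{u,v\}$ by exhibiting an explicit theta or theta$^{+}$, then augments each side $H'_i$ with a real shortest $(u,v)$-path $P_{3-i}$ through the \emph{other} side: the auxiliary graphs $G[V(H'_i) \cup V(P_{3-i})]$ are induced subgraphs of $G$, so wheel- and 3PC-freeness is inherited for free, but the size reduction $|V(P_{3-i})| < |V(H'_{3-i})|$ only works when neither side is a path or cycle; the degenerate case is then handled separately, and \autoref{cl.counterex.reduction} is invoked to shrink the path/cycle side down to three vertices. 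You instead attach a dummy degree-two vertex $w^*$, which makes the size reduction work as soon as every component has at least two vertices and hence delivers the singleton component directly, with no appeal to \autoref{cl.counterex.reduction} and no separate bound on the number of components; the price is that your gadgets are not induced subgraphs of $G$, so you must prove they are wheel-free and 3PC-free via the lifting argument you sketch. That argument does go through --- a degree-two vertex of any wheel or 3PC is either a non-hub rim vertex or an internal vertex of one of the three constituent paths, and substituting for $w^*$ a shortest $(u,v)$-path through the opposite component (whose interior has no neighbours on the other side, and whose only possible chord is $uv$, an edge already accounted for in the configuration) reproduces a configuration of the same type inside $G$ --- but you correctly flag it as the main obstacle, and you should carry out the case analysis explicitly, in particular observing that when $uv \in E(G)$ the wheel case is vacuous because the rim would collapse to a triangle forcing the hub to be adjacent to $w^*$. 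Your handling of the endgame (ruling out $G_2$ being a path or a cycle) is fine, apart from the minor slip that when $G_2$ is a path the obstruction to $u,v$ not being its ends is a degree-one vertex (equivalently, $u$ or $v$ becoming a cutvertex separating off the tail), not a cutvertex strictly between $u$ and $v$.
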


\begin{proof}[Proof of \autoref{cl.counterex.2cutset.one.small}]
We first prove that $G \setminus \{u,v\}$ has exactly two connected connected components. Suppose not, and let 
$G_{1}, G_{2}$ and $G_{3}$ be three connected components of $G \setminus \{u,v\}$. Since $G$ is 2-connected, neither of the vertices 
$u,v$ is a cutvertex, and thus for each $i \in [3]$, 
each of $u,v$ has at least one neighbor in $G_{i}$. 
For each $i \in [3]$, let $P_{i}$ be a shortest $(u,v)$-path with all its internal vertices in $G_{i}$ and such that $P_{i}$ does not use the edge $\{u,v\}$ if it is present in $G$. 
$P_{i}$ has length at least two, and $G[V(P_{1}) \cup V(P_{2}) \cup V(P_{3})]$ is either a theta$^{+}$ or a theta (depending on whether $\{u,v\}\in E(G)$ or not), 
which is a contradiction. Thus $G \setminus \{u,v\}$ has exactly two connected components.

Let $H_{1}, H_{2}$ be the two connected components of $G\setminus \{u,v\}$ and let 
$H^{\prime}_{1}: = G[V(H_{1}) \cup \{u,v\}]$ and $H^{\prime}_{2}: = G[V(H_{2}) \cup \{u,v\}]$.

We claim that there exists $i \in [2]$, such that the graph $H^{\prime}_{i}$ is neither a $(u,v)$-path, nor a cycle.
Suppose not, then either $\{u,v\} \notin E(G)$ and the graph $G$ is a cycle, and hence a Hamiltonian graph; or $\{u,v\} \in E(G)$, in which case
the subgraph of $G$ which we obtain if we delete the edge 
$\{u,v\}$ is a Hamiltonian cycle of $G$. Both outcomes contradict the fact that $G$ is a non-Hamiltonian graph.

We claim that there exists $i \in [2]$, such that the graph $H^{\prime}_{i}$ is either a $(u,v)$-path or a cycle.
Suppose not. For each $i \in [2]$ let $P_{i}$ be a shortest $(u,v)$-path in $H^{\prime}_{i}$ which does not use the edge $\{u,v\}$ if it is present in $G$. 
Then $|V(P_{i})| < |H^{\prime}_{i}|$.
The graphs $H^{\prime \prime}_{1}: = G[V(H^{\prime}_{1})\cup V(P_{2})]$ and $H^{\prime \prime}_{2}: =G[V(H^{\prime}_{2})\cup V(P_{1})]$ 
are 2-connected induced subgraphs of $G$ and
$|V(H^{\prime \prime}_{1})| < |V(G)|$ and $|V(H^{\prime \prime}_{2})| < |V(G)|$. Thus, by the minimality of $G$ as a counterexample, it follows that for each $i \in [2]$, 
the graph $H^{\prime \prime}_{i}$ has a Hamiltonian cycle $C_{i}$ since every vertex in $V(P_{3-i})\setminus \{u,v\}$ has degree $2$ in $H^{\prime \prime}_{i}$, it follows that
$E(P_{3-i}) \subseteq E(C_{i})$ for $i \in \{1,2\}$.
Let $C: = (C_{1}\setminus P_{2})\cup (C_{2}\setminus P_{1})$. Then $C$
is a Hamiltonian cycle of $G$, which is a contradiction. 

Without loss of generality we may assume that $H^{\prime}_{1}$ is either a $(u,v)$-path, or a cycle and that $H^{\prime}_{2}$ is neither a path nor a cycle.
Since $\{u,v\}$ is a cutset, we have that $|V(H_{1}^{\prime})| \geq 3$.
We claim that $|V(H_{1}^{\prime})| = 3$. Suppose not. Then there exist $v_{1},v_{2} \in V(G_{1})$, such that $\{v_{1}, v_{2}\}\in E(G)$ and 
$d(v_{1}) = d(v_{2}) = 2$, which contradicts \autoref{cl.counterex.reduction}. 
The graphs $G_{1}$ and $G_{2}$ are induced subgraphs of $G$, which witness that the claim holds.
\end{proof}

\begin{corollary}
\label{cl.reduction.special.edges}
If $\{u,v\}$ is a clique cutset of size two of $G$, then
there exist induced subgraphs of $G$, say $G_{1}, G_{2}$, such that $|V(G_{1})|, |V(G_{2})| < |V(G)|$, $V(G_{1}) \cap V(G_{2}) = \{u,v\}$, 
$V(G_{1}) \cup V(G_{2}) = V(G)$, $G_{1}$ is a triangle and $G_{2}$ is neither a path nor a cycle.
In this case we call the edge $\{u,v\}$ a {\em special edge} of the graph $G$, 
and the unique $(u,v)$-path of length two in $G_{1}$ the {\em corresponding path} of this special edge.
\end{corollary}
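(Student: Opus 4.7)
The plan is to derive this as an essentially immediate consequence of \autoref{cl.counterex.2cutset.one.small}. Since $\{u,v\}$ is a clique cutset of size two, in particular $\{u,v\}$ is a 2-cutset of $G$, so \autoref{cl.counterex.2cutset.one.small} applies and yields induced subgraphs $G_1, G_2$ with $|V(G_1)|, |V(G_2)| < |V(G)|$, with $V(G_1)\cap V(G_2) = \{u,v\}$, $V(G_1)\cup V(G_2) = V(G)$, with $G_2$ neither a path nor a cycle, and with $G_1$ being either a $(u,v)$-path of length two or a triangle.

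The one remaining step is to rule out the ``path'' alternative. Since $\{u,v\}$ is a clique, we have $\{u,v\} \in E(G)$; moreover $G_1$ is an induced subgraph of $G$ containing both $u$ and $v$, so $\{u,v\} \in E(G_1)$. Let $w$ be the third vertex of $G_1$ (the internal vertex of the length-two path, if we are in that case). If $G_1$ were the $(u,v)$-path of length two with internal vertex $w$, then $\{u,w\}, \{w,v\} \in E(G_1)$, and together with $\{u,v\}\in E(G_1)$ this makes $G_1$ a triangle, not a path. Hence $G_1$ is a triangle, and $G_1, G_2$ have all the claimed properties.

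The only thing to watch for is that \autoref{cl.counterex.2cutset.one.small} is stated for a 2-cutset without requiring $\{u,v\}$ to be non-adjacent, so we may apply it here; and that the definitions of ``special edge'' and ``corresponding path'' at the end of the statement are just naming conventions that follow once $G_1$ is identified as a triangle on $\{u,v,w\}$, whose $(u,v)$-path of length two is the unique path $u$--$w$--$v$. There is no substantive obstacle; the content is already present in \autoref{cl.counterex.2cutset.one.small}.
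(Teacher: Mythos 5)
Your proof is correct and matches the paper's, which simply cites \autoref{cl.counterex.2cutset.one.small} as immediate; your extra step observing that the clique condition forces the edge $\{u,v\}$ into the induced subgraph $G_1$, ruling out the path alternative, is exactly the detail the paper leaves implicit.
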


\begin{proof}[Proof of \autoref{cl.reduction.special.edges}]
Immediate by \autoref{cl.counterex.2cutset.one.small}.
\end{proof}

\bigskip
Let $G^{\prime}$ be the induced subgraph of $G$ that we obtain if for each special edge of $G$, we 
delete the unique internal vertex of its corresponding path.
In claims \ref {cl.cutsets.in.G'}, \ref{cl.G'.no.clique.cutset}, \ref{cl.G'.2cutset.one.side.small} and \ref{cl.G'.no.HC.with.special.edges}, we prove 
some properties of the graph $G^{\prime}$.

\begin{claim}
\label{cl.cutsets.in.G'}
Let $X$ be a cutset in $G^{\prime}$. Then $X$ is a cutset in $G$.
\end{claim}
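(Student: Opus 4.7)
The plan is to prove the contrapositive: if $G \setminus X$ is connected, then $G' \setminus X$ is connected. For this, I would first make explicit the structural relationship between $G$ and $G'$. Let $S$ denote the set of ``middle'' vertices deleted in going from $G$ to $G'$, one for each special edge of $G$. By \autoref{cl.reduction.special.edges}, each $s \in S$ is the unique internal vertex of the corresponding path of some special edge $\{u,v\}$; in particular $s$ has degree exactly two in $G$, its two neighbors are precisely $u$ and $v$, and crucially $\{u,v\} \in E(G)$, with $u,v \in V(G')$ (and so $\{u,v\} \in E(G')$ as well). Also note that since $X \subseteq V(G')$, we have $X \cap S = \emptyset$, so every vertex of $S$ still lies in $G \setminus X$.

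Next, assume $G \setminus X$ is connected, and take any two vertices $a, b \in V(G') \setminus X$. Pick a path $P$ in $G \setminus X$ from $a$ to $b$. The key step is to modify $P$ so that it avoids $S$ entirely. Consider any $s \in V(P) \cap S$. Since $a, b \in V(G')$, the vertex $s$ is an interior vertex of $P$, so the two edges of $P$ at $s$ use both neighbors of $s$ in $G$, namely $u$ and $v$. Replace the subpath $u, s, v$ of $P$ by the single edge $uv$, which is available both in $G$ and in $G'$ and which has endpoints in $V(G') \setminus X$. Doing this for every $s \in V(P) \cap S$ turns $P$ into a walk in $G' \setminus X$ connecting $a$ and $b$. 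Hence $G' \setminus X$ is connected, which is what we want.

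I do not expect any serious obstacle here: the only nontrivial move is verifying that the shortcut $u,v$ lies in $V(G') \setminus X$, which is immediate since $u, v$ are endpoints of a special edge (hence in $V(G')$) and $u, v$ already appeared on the path $P$ in $G \setminus X$ (hence are not in $X$). The argument is really a one-step rerouting, using only the fact that each deleted vertex $s \in S$ sits on a triangle with its two neighbors, so $s$ is always ``redundant'' for connectivity purposes.
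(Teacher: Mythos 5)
Your proof is correct and rests on the same key fact as the paper's: each deleted vertex has exactly two neighbors in $G$, and those neighbors are adjacent (they form a special edge), so the vertex is redundant for connectivity. The paper packages the rerouting more compactly by taking an \emph{induced} path between two components of $G'\setminus X$ and observing that a deleted vertex cannot be an internal vertex of such a path, but this is the same one-step argument in contrapositive versus contradiction form.
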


\begin{proof}[Proof of \autoref{cl.cutsets.in.G'}]
Let us suppose towards a contradiction that $G\setminus X$ is connected. 
Let $G^{\prime}_{1}, G^{\prime}_{2}$ be two connected components of $G^{\prime} \setminus X$.
Let $u\in V(G^{\prime}_{1})$ and $v\in V(G^{\prime}_{2})$. Let $P$ be an induced $(u,v)$-path in $G$.
Let $x \in V(P)\setminus V(G^{\prime})$. Then $N_{V(G)}(x)$ is a clique, so $|N_{V(P)}(x)|\leq 1$ as $P$ is induced. 
But then $x \in \{u,v\}$, a contradiction.
\end{proof}

\begin{claim}
\label{cl.G'.no.clique.cutset}
The graph $G^{\prime}$ does not admit a clique cutset.
\end{claim}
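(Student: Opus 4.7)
The plan is to suppose for contradiction that $K$ is a clique cutset of $G'$ and derive a contradiction by first bounding $|K|$ and then treating the remaining sizes. By \autoref{cl.cutsets.in.G'}, $K$ is also a cutset of $G$, and since $G'$ is an induced subgraph of $G$, the set $K$ still induces a clique in $G$. Because $G$ is wheel-free, $G$ contains no induced $K_4$ (recall that $K_4$ is a wheel in our convention), so $|K|\le 3$. The case $|K|=1$ contradicts the 2-connectedness of $G$ immediately.

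For $|K|=2$, writing $K=\{u,v\}$, the set $\{u,v\}$ is a clique cutset of $G$, so by \autoref{cl.reduction.special.edges} it is a special edge: there exists $w\in V(G)\setminus V(G')$ with $N_G(w)=\{u,v\}$, and $G\setminus\{u,v\}$ has exactly two components $\{w\}$ and a larger one $B$ (as established in the proof of \autoref{cl.counterex.2cutset.one.small}). The plan is to show $G'\setminus\{u,v\}$ is connected, contradicting $K$ being a cutset of $G'$. The key observation is that any removed vertex $w^\ast$ lying in $B$ has at most two neighbors there, which are adjacent in $G$ since they form the endpoints of a special edge; thus removing any such $w^\ast$ preserves connectivity, and iterating through all removed vertices gives that $G'\setminus\{u,v\}$ is connected.

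For $|K|=3$, writing $K=\{a,b,c\}$, I would pick two components $X^+, Y^+$ of $G\setminus K$. Using 2-connectedness of $G$, each of $X^+, Y^+$ must have neighbors in at least two vertices of $K$ (otherwise some vertex of $K$ is a cutvertex). I would then split into cases. If some component, say $X^+$, has neighbors in all three of $\{a,b,c\}$, I would construct a forbidden induced subgraph in $G[X^+\cup K]$: pick neighbors $a',b',c'\in X^+$ of $a,b,c$ together with a minimum-vertex Steiner tree $T\subseteq G[X^+]$ connecting $\{a',b',c'\}$; since $G$ has no induced $K_4$, no single vertex is adjacent to all of $a,b,c$, so $T$ is nontrivial. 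Analyzing the two possible shapes of $T$ (a path, or a proper Y with an internal branch vertex $l$) and using minimality of $T$ to rule out chords, the result would be either an induced pyramid (contradicting 3PC-freeness) or an induced wheel (contradicting wheel-freeness). Otherwise both $X^+, Y^+$ attach to exactly two vertices of $K$; if they attach to the same pair $\{a,b\}$, then $c$ is isolated from $X^+\cup Y^+$ in $G\setminus\{a,b\}$, producing at least three components and contradicting the exact-two-components conclusion established in the proof of \autoref{cl.counterex.2cutset.one.small}; if they attach to distinct pairs, say $X^+\sim\{a,b\}$ and $Y^+\sim\{a,c\}$, then $\{a,b\}$ is a 2-cutset and \autoref{cl.counterex.2cutset.one.small} forces one side to be a triangle (length-2 path is excluded by $ab\in E(G)$), which in turn forces $X^+$ to be a single removed vertex, contradicting the fact that $X^+$ contains a nonempty component of $G'\setminus K$.

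The main obstacle will be the construction of a wheel or induced pyramid in the case where some component $X^+$ attaches to all three of $a,b,c$. The skeleton is standard, but the care required to rule out chords between distinct arms of the Steiner tree, between $K$ and internal vertices of an arm, and within a single arm, makes this the most delicate step; the minimality of the chosen Steiner configuration will be the main tool for excluding each such chord, together with a case split on whether the Steiner tree is a path (producing a cycle with the third vertex of $K$ as an extra high-degree neighbor, i.e., a wheel) or a proper Y (giving a pyramid directly).
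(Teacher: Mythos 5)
Your reduction to $|K|\le 3$ and your treatment of $|K|\in\{1,2\}$ are sound (and for $|K|=2$ your argument, though more laborious than necessary, does work: the deleted vertices are pairwise non-adjacent simplicial vertices of degree two, so removing them cannot disconnect anything). The genuine gap is in the case $|K|=3$, which is the heart of the claim. Your main sub-case --- a component attaching to all three of $a,b,c$ --- is only a plan, and the plan as stated does not go through: a \emph{minimum-vertex} Steiner tree $T$ connecting three fixed neighbours $a',b',c'$ need not induce a tree (a chord between two vertices at distance two in $T$ does not reduce the vertex count), and fixing $a',b',c'$ in advance gives you no control whatsoever over additional edges from $a,b,c$ to internal vertices of $T$; each such edge changes which forbidden configuration you land in (pyramid, short pyramid, wheel, short prism, \dots), and you explicitly defer exactly this analysis. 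There is also a smaller unaddressed point: you pick $X^+,Y^+$ as components of $G\setminus K$ but later need $X^+$ to contain a vertex of $G'$; a component of $G\setminus K$ can consist of a single deleted vertex, so you must argue (using that deleted vertices are simplicial of degree two, hence shortcuttable) that distinct components of $G'\setminus K$ lie in distinct components of $G\setminus K$ and choose $X^+,Y^+$ accordingly.

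The paper avoids all of this with a much shorter argument that you should compare against. Working directly in $G'$: since $G'$ has no clique cutset of size at most two (your cases $|K|\le 2$), \emph{every} vertex of $\{u,v,w\}$ has a neighbour in \emph{every} component of $G'\setminus\{u,v,w\}$ --- otherwise a proper sub-clique would already be a clique cutset. One then takes, in each of two components, a shortest $(u,v)$-path $P_i$ avoiding the edge $uv$; wheel-freeness forces $w$ to have no neighbour in the interior of $P_i$ (else $G'[V(P_i)\cup\{w\}]$ is a wheel, since $G[V(P_i)]$ is a cycle), and then $P_1$, $P_2$ and the length-two path $u$-$w$-$v$, together with the edge $uv$, form a theta$^{+}$. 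The key idea you are missing is that you never need a structure attaching to all three clique vertices at once: two vertices of the triangle plus two components already yield a theta$^{+}$, with the third clique vertex supplying the third path for free. This sidesteps the Steiner-tree case analysis entirely, so as written your proposal does not constitute a proof of the claim.
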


\begin{proof}[Proof of \autoref{cl.G'.no.clique.cutset}]
By \autoref{cl.cutsets.in.G'}, the definition of $G^{\prime}$, and the fact that $G$ is 2-connected, we conclude that $G^{\prime}$ 
has neither cutvertex, nor clique cutset of size two.
Also, since $G^{\prime}$ is wheel-free, it follows that it does not contain an induced subgraph isomorphic to 
the complete graph on $4$ vertices and thus $G^{\prime}$ has no clique cutset of size four or more.

We claim that $G^{\prime}$ has no clique cutset of size three. Suppose not and let $\{u,v,w\}$ be a clique cutset of $G^{\prime}$.
Let  $G_{1}^{\prime}, G_{2}^{\prime}$ be two connected components of $G^{\prime} \setminus \{u,v,w\}$.
Since $G^{\prime}$ has no clique cutset of size two, it follows that for each $i \in [2]$ each of $u,v$ and $w$, has at least one neighbor in $G_{i}^{\prime}$.
For each $i \in [2]$, let $P_{i}$ be a shortest $(u,v)$-path which does not use the edge $\{u,v\}$, and with all its internal vertices in $G_{i}^{\prime}$.
Then $G[V(P_{i})]$ is a cycle and $w \notin V(P_{i})$. 

We claim that for each $i \in [2]$ no internal vertex of $P_{i}$ is adjacent with the vertex $w$. Suppose not. Then $G^{\prime}[V(P_{i}) \cup \{u,v,w\}]$ is a wheel, which is a contradiction.
It follows that $G^{\prime}[V(P_{1}) \cup V(P_{2}) \cup \{u,v,w\}]$, is a $theta^{+}$, which is a contradiction.
Thus, $G^{\prime}$ has no clique cutset of size three, and hence $G^{\prime}$ has no clique cutset.
\end{proof}

\begin{claim}
\label{cl.G'.2cutset.one.side.small}
Let $\{u,v\}$ be a 2-cutset of $G^{\prime}$. Then $G^{\prime} \setminus \{u,v\}$ has exactly two connected components, and one of these is a single vertex.
\end{claim}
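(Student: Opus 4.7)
The plan is to first reduce the claim about $G'$ back to a claim about $G$ by invoking the earlier claims about 2-cutsets of $G$, and then to argue that the vertex-deletion from $G$ to $G'$ does not split any further components in the "large" side of the cut.

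First I would observe that since $G'$ has no clique cutset by \autoref{cl.G'.no.clique.cutset}, the vertices $u$ and $v$ must be non-adjacent in $G'$, hence non-adjacent in $G$ (as $G'$ is an induced subgraph). Next, \autoref{cl.cutsets.in.G'} tells us that $\{u,v\}$ is also a 2-cutset of $G$, so \autoref{cl.counterex.2cutset.one.small} applies: $G\setminus\{u,v\}$ has exactly two components, and, since $u,v$ are non-adjacent, the "small" side $G_1$ must be the $(u,v)$-path of length two (the triangle option is ruled out). Write $V(G_1)=\{u,w,v\}$, so $w$ has degree two in $G$ with $N_G(w)=\{u,v\}$, and let $H_2$ denote the large component of $G\setminus\{u,v\}$.

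The next step is to verify that $w$ survives the passage from $G$ to $G'$. By construction, a vertex is deleted only if it is the unique internal vertex of the corresponding path of some special edge; such a deleted vertex has degree two in $G$ and its two neighbors are adjacent. But $w$'s two neighbors are $u$ and $v$, which are non-adjacent, so $w\in V(G')$. Since $N_{G'}(w)\subseteq\{u,v\}$, the singleton $\{w\}$ is already a connected component of $G'\setminus\{u,v\}$.

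The main step, which I expect to be the only non-routine point, is to show that $G'[V(H_2)\setminus D]$ is connected, where $D:=V(G)\setminus V(G')$; once this is established the conclusion follows, since $V(G')\setminus\{u,v\}=\{w\}\cup(V(H_2)\setminus D)$. I would argue by contradiction: if $A$ and $B$ were distinct components of $G'[V(H_2)\setminus D]$, take a shortest $(A,B)$-path $P=(a_0,a_1,\ldots,a_k)$ in the connected graph $G[V(H_2)]$. If some internal vertex $a_i$ lies in $D$, then $a_i$ has degree two in $G$ with neighbors $a_{i-1},a_{i+1}$, and these two neighbors are adjacent in $G$ (that is the defining property of a deleted vertex); then $(a_0,\ldots,a_{i-1},a_{i+1},\ldots,a_k)$ is a strictly shorter $(A,B)$-path in $G[V(H_2)]$, contradicting the choice of $P$. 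Hence no internal vertex of $P$ lies in $D$, and since $a_0\in A$ and $a_k\in B$ also avoid $D$, the path $P$ lies entirely in $G'[V(H_2)\setminus D]$, contradicting that $A$ and $B$ were distinct components. This contradiction finishes the proof.
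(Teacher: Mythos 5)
Your proof is correct and follows the same route as the paper, which simply declares the claim ``immediate'' from \autoref{cl.counterex.2cutset.one.small} and \autoref{cl.cutsets.in.G'}; you supply the routine details the authors omit (ruling out the triangle case via \autoref{cl.G'.no.clique.cutset}, checking that the degree-two vertex $w$ survives the passage to $G^{\prime}$, and the shortcutting argument showing the large side stays connected). No gaps.
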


\begin{proof}[Proof of \autoref{cl.G'.2cutset.one.side.small}]
Immediate by \autoref{cl.counterex.2cutset.one.small} and \autoref{cl.cutsets.in.G'}.
\end{proof}

\begin{claim}
\label{cl.G'.no.HC.with.special.edges}
The graph $G^{\prime}$ does not have a Hamiltonian cycle which contains all the special edges of $G$.
\end{claim}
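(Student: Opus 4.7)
The plan is to argue by contradiction: suppose $C'$ is a Hamiltonian cycle of $G'$ that contains every special edge of $G$, and show that this forces $G$ itself to be Hamiltonian, contradicting the assumption that $G$ is a counterexample to \autoref{th.main.reduction}.

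First I would record the relevant local structure around a special edge. For each special edge $\{u,v\}$ of $G$, let $w_{uv}$ denote the unique internal vertex of its corresponding path, so that the triangle $G_1$ from \autoref{cl.reduction.special.edges} is $G[\{u,v,w_{uv}\}]$. Because $V(G_1)\cap V(G_2)=\{u,v\}$ and $w_{uv}\in V(G_1)\setminus V(G_2)$, the vertex $w_{uv}$ has $N_G(w_{uv})=\{u,v\}$, and in particular has degree $2$ in $G$. Moreover, distinct special edges yield distinct middle vertices, since $w_{uv}$ is determined by its neighborhood $\{u,v\}$. By the definition of $G'$ we have
\[
V(G)=V(G')\;\cup\;\{w_{uv}\mid \{u,v\}\text{ is a special edge of }G\},
\]
and this union is disjoint.

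Next I would construct a candidate Hamiltonian cycle $C$ of $G$ by subdividing $C'$ at every special edge. Concretely, starting from $C'$, for each special edge $\{u,v\}\in E(C')$ I would remove the edge $\{u,v\}$ and add the two edges $\{u,w_{uv}\}$ and $\{v,w_{uv}\}$; both of these edges exist in $G$ since $G[\{u,v,w_{uv}\}]$ is a triangle. Since the replacements are performed at disjoint edges of $C'$ and introduce pairwise distinct new vertices $w_{uv}$, each of degree $2$ in the resulting subgraph, the outcome $C$ is a cycle in $G$ whose vertex set is $V(C')\cup\{w_{uv}:\{u,v\}\text{ special}\}=V(G)$. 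Thus $C$ is a Hamiltonian cycle of $G$, contradicting the fact that $G$ is non-Hamiltonian and completing the proof.

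There is essentially no obstacle here once the bookkeeping is set up: the whole argument rests on the observation that each $w_{uv}$ is a degree-$2$ vertex of $G$ with neighborhood exactly $\{u,v\}$, so that subdividing the edge $\{u,v\}$ of $C'$ with $w_{uv}$ produces a valid cycle in $G$. The assumption that $C'$ uses every special edge is precisely what guarantees that every deleted vertex can be reinserted in this manner, which is why the claim is formulated that way.
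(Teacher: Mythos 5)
Your proof is correct and follows essentially the same route as the paper: assume such a Hamiltonian cycle $C'$ of $G'$ exists, replace each special edge of $C'$ by its corresponding path (equivalently, subdivide it with the middle vertex $w_{uv}$), and observe that the result is a Hamiltonian cycle of $G$, a contradiction. The extra bookkeeping you supply (each $w_{uv}$ has neighborhood exactly $\{u,v\}$, the middle vertices are pairwise distinct, and they are exactly the vertices of $V(G)\setminus V(G')$) is implicit in the paper's shorter argument.
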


\begin{proof}[Proof of \autoref{cl.G'.no.HC.with.special.edges}]
Let us assume towards a contradiction that $G^{\prime}$ has a Hamiltonian cycle, say $C^{\prime}$, which contains all the special edges of $G$. 
For every special edge $\{x,y\}$ of $G$, let us denote by $P_{xy}$ its corresponding path. Let $C$ be the subgraph of $G$ obtained from $C^{\prime}$ as follows:
For every special edge $\{x,y\}$ of $G$, we delete from $C^{\prime}$ the edge $\{x,y\}$ and we add the path $P_{xy}$. Then, $C$ is a Hamiltonian cycle of $G$,
which contradicts the fact that $G$ is a non-Hamiltonian graph.
\end{proof}

\begin{claim}
\label{cl.line.graph.of.chordless}
There exists a triangle-free chordless graph $H$, with $\Delta(H) \leq 3$, such that $G^{\prime} = L(H)$.
\end{claim}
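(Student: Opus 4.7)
The plan is to invoke the structure theorem for only-prism graphs, \autoref{th.only.prism.str}, applied to $G'$. The key observation is that $G'$ is an only-prism graph: since $G$ is wheel-free and contains no induced 3PC, the graph $G$ is in particular theta-free and pyramid-free; as $G'$ is an induced subgraph of $G$, all three properties descend to $G'$, so $G'$ is (theta, wheel, pyramid)-free.

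Having established that $G'$ is only-prism, \autoref{th.only.prism.str} gives two alternatives: either $G' = L(H)$ for some triangle-free chordless graph $H$, or $G'$ admits a clique cutset. The second alternative is ruled out by \autoref{cl.G'.no.clique.cutset}, so $G' = L(H)$ with $H$ triangle-free and chordless.

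To bound $\Delta(H)$, I would use the fact that $K_{4}$ is itself a wheel (take a triangle as the cycle $C$ and add a fourth vertex adjacent to all three of its vertices); thus $G'$ being wheel-free forces $G'$ to be $K_{4}$-free. Now if $H$ contained a vertex $v$ of degree at least $4$, then the four edges of $H$ incident to $v$ would pairwise share the vertex $v$ and hence form an induced $K_{4}$ in $L(H) = G'$, a contradiction. Therefore $\Delta(H) \leq 3$.

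There is no substantial obstacle here once the correct structural input is identified; the only point requiring a brief sanity check is the small discrepancy in the definitions of \emph{wheel} and \emph{only-prism} between the present paper and \cite{structure.only.prism}, but the paper already notes that only-prism graphs in the sense used here are also only-prism in the sense of \cite{structure.only.prism}, so the invocation of \autoref{th.only.prism.str} is legitimate.
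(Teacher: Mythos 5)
Your proposal is correct and follows essentially the same route as the paper: establish that $G'$ is only-prism, rule out the clique-cutset outcome of \autoref{th.only.prism.str} via \autoref{cl.G'.no.clique.cutset}, and deduce $\Delta(H)\leq 3$ from $K_4$-freeness of $G'$. The paper's proof is just a terser version of the same argument.
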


\begin{proof}[Proof of \autoref{cl.line.graph.of.chordless}]
By our assumptions for $G$, and the definition of $G^{\prime}$, the graph $G^{\prime}$ 
is an only-prism graph. By \autoref{cl.G'.no.clique.cutset}, $G^{\prime}$ has no clique cutset.
Thus, by \autoref{th.only.prism.str}, there exists a triangle-free chordless graph $H$, such that $G^{\prime} = L(H)$.
Since $G^{\prime}$ is $K_{4}$-free, it follows that $\Delta(H) \leq 3$.
\end{proof}

\bigskip
In Claims 8--12, we prove some properties of the graph $H$.

\begin{claim}
\label{cl.H.is.2conn}
$H$ is 2-connected.
\end{claim}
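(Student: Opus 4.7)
The plan is to verify (i) that $H$ is connected and (ii) that $H$ has no cutvertex. For (ii), I will repeatedly produce a clique cutset of $G'$, contradicting \autoref{cl.G'.no.clique.cutset}.

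For (i), I would first observe that each vertex deleted in passing from $G$ to $G'$ is a degree-$2$ vertex whose two neighbors are joined by an edge of $G$, so its removal preserves connectedness. Since $G$ is $2$-connected, this shows that $G'$ is connected; hence $L(H) = G'$ is connected, and so is $H$ (after discarding any isolated vertices of $H$, which contribute nothing to $L(H)$).

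For (ii), I would suppose that $v \in V(H)$ is a cutvertex. The set $E_v$ of edges of $H$ incident to $v$ is a clique in $L(H) = G'$, and $L(H) - E_v$ equals $L(H - v)$, whose non-empty connected pieces are precisely the graphs $L(C_i)$ for those components $C_i$ of $H - v$ that contain at least one edge. If two or more components of $H - v$ contain an edge, then $E_v$ is already a clique cutset of $G'$. Otherwise at most one component does; if exactly one, call it $C_1$, then the other components are singletons, each a leaf of $H$ adjacent to $v$. I would pick such a leaf $u$ and set $K := E_v \setminus \{vu\}$: since $u$'s only neighbor in $H$ is $v$, the vertex $vu \in V(L(H))$ has all its neighbors inside $E_v$, so removing $K$ isolates $vu$ from the untouched, non-empty, still-connected $L(C_1)$, making $K$ a clique cutset of $G'$ of size $\deg_H(v) - 1 \in \{1, 2\}$.

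The remaining degenerate possibility is that no component of $H - v$ contains an edge, which forces $H$ to be a star with center $v$ on at most $4$ vertices. In that case $G' = L(H)$ has at most $3$ vertices, and a direct check through the few possible candidates for $G$ shows each of them is Hamiltonian, contradicting the counterexample status of $G$. Every branch yields a contradiction, so $v$ cannot exist and $H$ is $2$-connected. The main care-point will be the single-big-component subcase, which hinges on the observation that a leaf of $H$ at $v$ contributes a vertex of $G'$ whose neighborhood lies entirely in $E_v$, enabling a small separating clique.
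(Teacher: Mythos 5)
Your proof is correct and takes essentially the same route as the paper: a cutvertex $v$ of $H$ yields a clique cutset of $G' = L(H)$ consisting of edges sharing the endpoint $v$, contradicting \autoref{cl.G'.no.clique.cutset}, with the degenerate star case handled separately. The paper merges your Cases A and B by taking as the clique only the edges from $v$ into a single component of $H \setminus \{v\}$ that contains an edge, and dispatches the star case by observing that $G'$ would be $K_3$, contradicting \autoref{cl.G'.no.HC.with.special.edges}; both variants are sound.
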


\begin{proof}[Proof of \autoref{cl.H.is.2conn}]
Since $G^{\prime}$ is connected and $G^{\prime} = L(H)$, it follows that $H$ is connected. 
Let us assume towards a contradiction that $H$ is not 2-connected. Let $v \in V(H)$ be a cutvertex of $H$.
If every edge of $H$ is incident with $v$, then $G^{\prime}$ is a complete graph. Since $G^{\prime}$ is $2$-connected and $K_{4}$-free, 
it follows that $G^{\prime}$ is isomorphic to $K_{3}$, contrary to \autoref{cl.G'.no.HC.with.special.edges}.
Now let $X$ be a component of $H\setminus \{v\}$ containing an edge $e$ not incident with $v$. Let $K$ be the set of vertices in $G^{\prime}$
that correspond to edges of $H$ of the form $\{u,v\}$ where $u \in X$. Then $K$ is a clique cutset in $G^{\prime}$, contrary to \autoref{cl.G'.no.clique.cutset}.
\end{proof}

\begin{claim}
\label{cl.H.has.no.two.edgecutset.with.two.sides.big}
If $F$ is a 2-edge-cutset in $H$, then the graph $H \setminus F$ has exactly two connected components, one of which is either a single vertex or a single edge.
\end{claim}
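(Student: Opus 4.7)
The plan is to translate the hypothesis that $F$ is a 2-edge-cutset of $H$ into the existence of a 2-vertex-cutset of $G^{\prime}=L(H)$ and then invoke \autoref{cl.G'.2cutset.one.side.small}. Let $F=\{e_1,e_2\}$. By \autoref{cl.H.is.2conn}, $H$ is 2-connected, so every vertex of $H$ has degree at least two and $H-v$ is connected for every $v\in V(H)$.

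I would then split into two cases depending on whether $H\setminus F$ has an isolated vertex. If some vertex $v$ is isolated in $H\setminus F$, then both edges of $H$ incident with $v$ lie in $F$; combined with $d(v)\geq 2$ this forces $d(v)=2$ and $F$ to be exactly the pair of edges at $v$. Then $H\setminus F$ is the disjoint union of $\{v\}$ and the connected graph $H-v$, so it has exactly two components, one of which is the single vertex $v$, as required.

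Otherwise every component of $H\setminus F$ contains at least one edge, and I would observe that the components of $L(H)\setminus\{e_1,e_2\}$ correspond bijectively to the components of $H\setminus F$: two edges of $H\setminus F$ belonging to different components of $H\setminus F$ share no vertex of $H$, while any two edges belonging to the same component of $H\setminus F$ are joined by a walk in $L(H)\setminus\{e_1,e_2\}$ coming from a path in $H\setminus F$. Hence $\{e_1,e_2\}$ is a 2-cutset of $G^{\prime}$, and \autoref{cl.G'.2cutset.one.side.small} yields that $G^{\prime}\setminus\{e_1,e_2\}$ has exactly two components with one of them a single vertex of $G^{\prime}$; pulling this back through the line-graph dictionary shows that $H\setminus F$ has exactly two components, one of which is a single edge of $H$. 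The main subtlety, and the one thing I would flag as a potential obstacle, is that isolated vertices of $H\setminus F$ leave no trace in $L(H)\setminus\{e_1,e_2\}$ and so cannot be detected by the line-graph reduction; they must be handled separately via the 2-connectivity of $H$, which is precisely the role of the case split above. Everything else is a routine translation between edge-cutsets of $H$ and vertex-cutsets of $L(H)$.
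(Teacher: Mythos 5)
Your proof is correct and takes essentially the same route as the paper, which simply states that the claim ``follows immediately'' from \autoref{cl.G'.2cutset.one.side.small} and the fact that $G'=L(H)$. Your separate treatment of the case where a component of $H\setminus F$ is an isolated vertex (which is invisible in $L(H)$ and so cannot be reached through the line-graph translation) is a genuine detail that the paper's one-line proof elides, and you handle it correctly via the $2$-connectivity of $H$.
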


\begin{proof}[Proof of \autoref{cl.H.has.no.two.edgecutset.with.two.sides.big}]
Follows immediately by \autoref{cl.G'.2cutset.one.side.small} and by the fact that $G^{\prime} = L(H)$.
\end{proof}

\begin{claim}
\label{cl.H.2sparse}
The graph $H$ does not admit a proper 2-cutset, and  $H$ is 2-sparse. 
\end{claim}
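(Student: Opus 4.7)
The plan is to use \autoref{th.structure.biconn.chordless}: since $H$ is 2-connected (\autoref{cl.H.is.2conn}) and chordless (\autoref{cl.line.graph.of.chordless}), $H$ is either 2-sparse or admits a proper 2-cutset. Thus it suffices to show that $H$ admits no proper 2-cutset; then 2-sparsity follows. Suppose toward a contradiction that $\{u,v\}$ is a proper 2-cutset of $H$ with split $(X,Y,u,v)$.

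First I would show that the number of edges from $\{u,v\}$ into each side is exactly three, forcing a rigid local picture at $u$ and $v$. Since each side contains a $(u,v)$-path, each of $u,v$ has at least one neighbor in each of $X,Y$, so $d_H(u), d_H(v) \in \{2,3\}$. Let $F_X$ (resp.\ $F_Y$) be the set of edges of $H$ from $\{u,v\}$ to $X$ (resp.\ $Y$); each is an edge-cutset of $H$. If $|F_X|=2$, then \autoref{cl.H.has.no.two.edgecutset.with.two.sides.big}---together with $|Y\cup\{u,v\}|\geq 3$---forces $X$ to be either a single vertex or a single edge, and a brief case analysis over $|X|\in\{1,2\}$ yields that either $H[X\cup\{u,v\}]$ is a $(u,v)$-path (contradicting the definition of a proper 2-cutset) or some vertex of $X$ has degree one in $H$ (contradicting 2-connectivity). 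Hence $|F_X|\geq 3$ and, symmetrically, $|F_Y|\geq 3$; combined with $|F_X|+|F_Y|=d_H(u)+d_H(v)\leq 6$, this forces $|F_X|=|F_Y|=3$ and $d_H(u)=d_H(v)=3$. Up to swapping $u$ and $v$, we may label the neighborhoods as $N_H(u)=\{x_1,x_2,y_u\}$ with $x_1,x_2\in X$ and $y_u\in Y$, and $N_H(v)=\{x_v,y_1,y_2\}$ with $x_v\in X$ and $y_1,y_2\in Y$.

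The contradiction I would then derive exhibits a 2-cutset of $G'$ violating \autoref{cl.G'.2cutset.one.side.small}. Consider $S=\{uy_u,vx_v\}\subseteq V(G')=V(L(H))$. Because $H$ has no edges between $X$ and $Y$, any edge of $L(H)$ joining a vertex coming from an edge of $H$ incident to $X$ to a vertex coming from an edge of $H$ incident to $Y$ must share an endpoint lying in $\{u,v\}$, and so sits inside one of the two triangles $T_u=\{ux_1,ux_2,uy_u\}$ or $T_v=\{vx_v,vy_1,vy_2\}$ of $L(H)$. Deleting $uy_u$ kills every crossing through $T_u$ and deleting $vx_v$ kills every crossing through $T_v$, so in $G'\setminus S$ the set $L_X^{\circ}\cup\{ux_1,ux_2\}$ (where $L_X^{\circ}$ consists of the vertices of $L(H)$ arising from edges of $H$ with both endpoints in $X$) has no edge to $L_Y^{\circ}\cup\{vy_1,vy_2\}$. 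Hence $S$ is a 2-cutset of $G'$, yet each of these two sides contains at least the two vertices $\{ux_1,ux_2\}$ and $\{vy_1,vy_2\}$ respectively, so neither side is a single vertex, contradicting \autoref{cl.G'.2cutset.one.side.small}. I expect the main obstacle to be the short case analysis showing $|F_X|,|F_Y|\geq 3$; once the rigid configuration is in hand, the final contradiction is essentially immediate from the structure of line graphs and the previously established properties of $G'$.
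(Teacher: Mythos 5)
Your proof is correct. Its first half runs parallel to the paper's: both arguments reduce the claim, via \autoref{th.structure.biconn.chordless}, to excluding a proper 2-cutset, and both get their traction from \autoref{cl.H.has.no.two.edgecutset.with.two.sides.big} applied to the few edges leaving $\{u,v\}$ into one side of the split. The endgames differ. The paper picks, for each of $u$ and $v$, a side on which it has a unique neighbour, obtains a 2-edge-cutset $\{uu',vv'\}$, concludes that one side $X$ satisfies $|X|\le 2$, and then a short case analysis shows that $H[X\cup\{u,v\}]$ is a $(u,v)$-path or that $H$ has a cutvertex, contradicting the definition of a proper 2-cutset or \autoref{cl.H.is.2conn}. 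You instead rule out $|F_X|=2$ and $|F_Y|=2$ outright (your two terminal cases---one side of the split inducing a path, or a degree-one vertex---do cover all subcases, since $u$ and $v$ are non-adjacent and each has a neighbour on each side), pin down the exact local picture $d_H(u)=d_H(v)=3$ with the $2{+}1$ split of neighbourhoods, and then translate back to the line graph, where the two edges $uy_u$ and $vx_v$ form a 2-cutset of $G'$ with both sides of size at least two, contradicting \autoref{cl.G'.2cutset.one.side.small}. Both endgames are sound; the paper's stays entirely inside $H$ and leans on the ``neither side is a path'' clause of a proper 2-cutset, whereas yours re-uses the vertex-cutset statement about $G'$ from which \autoref{cl.H.has.no.two.edgecutset.with.two.sides.big} was itself derived, at the cost of first having to determine the full degree configuration at $u$ and $v$.
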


\begin{proof}[Proof of \autoref{cl.H.2sparse}]
By \autoref{th.structure.biconn.chordless}, it suffices to prove that $H$ does not admit a proper 2-cutset. Suppose for a contradiction that $H$ admits a proper 2-cutset, and let $(X,Y,u,v)$ be
a split of this proper 2-cutset. Since $u$ and $v$ each have degree at most three in $H$ by \autoref{cl.line.graph.of.chordless}, it follows that there is a vertex $u' \in V(H)$ such that either $N(u) \cap X = \{u'\}$ or $N(u) \cap Y = \{u'\}$. Likewise, there is a vertex $v' \in V(H)$ such that either  $N(v) \cap X = \{v'\}$ or $N(v) \cap Y = \{v'\}$. It follows that $\{uu', vv'\}$ is a two-edge cutset in $H$. By \autoref{cl.H.has.no.two.edgecutset.with.two.sides.big}, it follows that $H \setminus \{uu', vv'\}$ has exactly two connected components, say $X'$ and $Y'$, and that $X'$ is a clique and $|X'| \leq 2$. Since $X, Y \neq \emptyset$ and $H \setminus \{uu', vv'\}$ contains no $(X, Y)$-path, we may assume that $X \subseteq X'$ and $Y \subseteq Y'$. It follows that $|X| \leq 2$. If $u$ has two neighbors in $X$, then $u' \in Y$ and hence $X'$ contains $u$ and its two neighbors, contrary to the fact that $|X'| \leq 2$; and likewise for $v$. It follows that $u, v$ both have exactly one neighbor in $X$. If $u$ and $v$ have a common neighbor $x$ in $X$, then either $|X| = 1$ and $H[X \cup \{u, v\}]$ is a path, contrary to fact that $\{u, v\}$ is a proper 2-cutset; or $x$ is a cutvertex of $H$, contrary to \autoref{cl.H.is.2conn}. It follows that $|X| = 2$, and so $X = X'$ and $u', v' \in X$. But now $H[X \cup \{u, v\}]$ is a four-vertex path with vertices $u, u', v', v$ in this order, contrary to the fact that $\{u, v\}$ is a proper 2-cutset. 
\end{proof}

\begin{claim}
\label{cl.H.noK4.minor}
The graph $H$ has no $K_{4}$-minor.
\end{claim}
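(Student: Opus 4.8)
The plan is to argue by contradiction: I assume that $H$ contains a $K_4$-minor and then either produce a forbidden induced subgraph of $G'=L(H)$ or contradict the non-Hamiltonicity of $G$. Since $\Delta(H)\le 3$ by \autoref{cl.line.graph.of.chordless}, \autoref{lem.minor.degree3} upgrades the $K_4$-minor to an honest subdivision: $H$ contains a subdivision $S$ of $K_4$ with branch vertices $b_1,b_2,b_3,b_4$, each of degree $3$ in $S$ and hence of degree exactly $3$ in $H$, and six internally disjoint paths $Q_{ij}$ joining them. The first key step is to invoke \autoref{cl.H.2sparse}: since $H$ is $2$-sparse and $\Delta(H)\le 3$, no edge of $H$ joins two vertices of degree $3$, so the branch vertices form an independent set. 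Consequently no path $Q_{ij}$ has length $1$; every $Q_{ij}$ has length at least $2$.

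Next I extract a prism. Deleting the interior of one path, say $Q_{34}$, from $S$ leaves a theta $T$ with ends $b_1,b_2$ and three $(b_1,b_2)$-paths of lengths $\ell(Q_{12})$, $\ell(Q_{13})+\ell(Q_{23})$, and $\ell(Q_{14})+\ell(Q_{24})$, the last two being at least $4$. By \autoref{lem.line.graph.subgraph}, $L(T)$ is an induced subgraph of $L(H)=G'$, and because $T$ is a theta all of whose paths have length at least $2$, $L(T)$ consists of two vertex-disjoint triangles (the line graphs of the stars at $b_1$ and $b_2$) joined by three internally disjoint paths whose lengths are one less than the corresponding path-lengths of $T$; that is, $L(T)$ is a prism or a short prism. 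If some path of $S$ has length at least $3$, I relabel so that $\ell(Q_{12})\ge 3$ and delete the opposite path $Q_{34}$; then all three paths of $T$ have length at least $3$, so $L(T)$ is a genuine prism. As a prism is a 3PC, this contradicts the assumption that $G'$ is 3PC-free.

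It remains to treat the case in which every path of $S$ has length exactly $2$, so that $S$ is obtained from $K_4$ by subdividing each edge once. Here $2$-sparseness is decisive a second time: each subdivision vertex $m_{ij}$ is adjacent in $S$ to the two branch vertices $b_i,b_j$, both of degree $3$, so $m_{ij}$ cannot acquire a third neighbour in $H$ without creating an edge between two vertices of degree $3$. Hence every vertex of $S$ already has all of its $H$-neighbours inside $S$, and since $H$ is connected (\autoref{cl.H.is.2conn}) we conclude $H=S$. Therefore $G'=L(H)$ is the line graph of the once-subdivided $K_4$ (the truncated tetrahedron), which is Hamiltonian.

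The final step, which I expect to be the main obstacle, converts this into a contradiction. If $G$ has no special edge then $G=G'$ is the truncated tetrahedron, hence Hamiltonian, contradicting that $G$ is non-Hamiltonian. Otherwise $G$ has a special edge $\{u,v\}$ with corresponding path $u-z-v$; the degree-$2$ vertex $z$ supplies a length-$2$ $(u,v)$-path that sidesteps the triangle structure which normally obstructs thetas inside a line graph, and together with two further internally disjoint $(u,v)$-paths routed through the remaining triangles of the truncated tetrahedron it should yield an induced theta$^+$ with $uv$ as its extra edge. Since a theta$^+$ is a 3PC, this again contradicts 3PC-freeness of $G'$. The delicate point is verifying that such an induced theta$^+$ exists for \emph{every} location of the special edge (the case where $uv$ is a ``matching'' edge of the truncated tetrahedron rather than a triangle edge requires the most care); the clean alternative is to show instead that the truncated tetrahedron admits a Hamiltonian cycle containing all special edges, which contradicts \autoref{cl.G'.no.HC.with.special.edges}.
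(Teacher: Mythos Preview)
Your outline matches the paper's proof almost step for step through the point where $H$ is the once-subdivided $K_4$ and $G'=L(H)$ is the truncated tetrahedron. The gap is in the final special-edge analysis.

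In the ``matching'' case, where the special edge $\{u,v\}$ joins two different triangles of $G'$, an induced theta$^{+}$ does \emph{not} exist. Any two further internally disjoint $(u,v)$-paths inside the truncated tetrahedron must leave $u$ through its two remaining neighbours, which lie together in a triangle of $G'$ and are therefore adjacent; that triangle edge is an unavoidable chord, and the same happens at $v$. What you actually get is an induced prism$^{+}$: the triangles at $u$ and at $v$, three paths (one of them $u\text{--}z\text{--}v$), and the extra edge $uv$. The paper handles the two cases exactly this way, producing a theta$^{+}$ when the special edge lies inside a triangle and a prism$^{+}$ when it is a matching edge. (A minor correction: the resulting 3PC sits in $G$, not in $G'$, since it uses the vertex $z\notin V(G')$; you should invoke 3PC-freeness of $G$.)

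Your proposed alternative---find a Hamiltonian cycle of $G'$ through all special edges and invoke \autoref{cl.G'.no.HC.with.special.edges}---does not work without further argument: a priori all three edges of a triangle of $G'$ could be special, and no Hamiltonian cycle contains all three. The paper sidesteps this by first proving that there are \emph{no} special edges (via the theta$^{+}$/prism$^{+}$ contradictions above), so that $G=G'$, and then exhibiting a single Hamiltonian cycle of the truncated tetrahedron.
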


\begin{proof}[Proof of \autoref{cl.H.noK4.minor}]
Let us assume towards a contradiction that $H$ has a $K_{4}$-minor. Then, by \autoref{lem.minor.degree3}, it follows that $H$ has a subgraph $K$ which is a subdivision of $K_{4}$.
Thus, there exist four vertices of $K$, say $\{v_{1}, v_{2}, v_{3}, v_{4}\}$, and a set of six internally vertex-disjoint paths 
$$\mathcal{P}: = \{P_{i,j}: i,j\in [4] \text{ and } i<j \text{ and } P_{i,j} \text{ is a } (v_{i}, v_{j})\text{-path in } K\},$$
such that $K = \bigcup \mathcal{P}$. 

Since $H$ is 2-sparse, it follows that each path in $\mathcal{P}$ has length at least two.
We claim that each path in $\mathcal{P}$ has length exactly two. Suppose not. Without loss of generality, we may assume that 
$P_{1,2}$ has length at least three. Let $J$ be the subgraph of $K$ (and thus of $H$) obtained by deleting from $K$ the internal vertices of the path $P_{3,4}$.
Then, the graph $L(J)$ is a prism. By \autoref{lem.line.graph.subgraph}, since $J$ is a subgraph of $H$ the graph $L(J)$ is an induced subgraph of $G^{\prime}$ 
and thus $G^{\prime}$ has a prism as an induced subgraph, a contradiction. 
Thus, each path in $\mathcal{P}$ has length exactly two.

We claim that $H=K$. Suppose not. Then, since $H$ is connected, there exists an edge in $E(H)\setminus E(K)$ which is incident to a vertex $v \in V(K)$. 
Since $\Delta(H) \leq 3$, we have that $v \notin \{v_{1}, v_{2}, v_{3}, v_{4}\}$. 
Thus there exists a path $P \in \mathcal{P}$ such that $v$ is the unique internal vertex of $P$. Then, $d_{H}(v) = 3$,
and thus both the two edges of $P$ which are incident with the vertex $v$ are not incident with a vertex of degree at most two, which contradicts that, by \autoref{cl.H.2sparse}, 
the graph $H$ is 2-sparse. Hence, $H=K$ and thus $G^{\prime} = L(K)$.

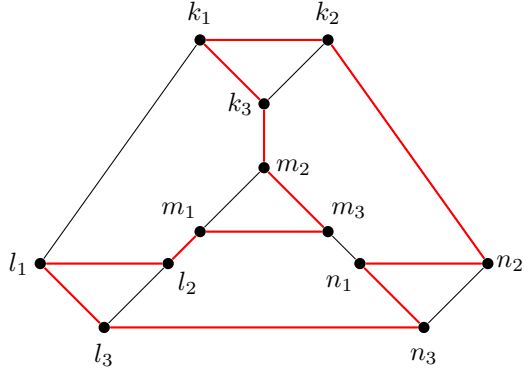
\begin{figure}[h!]\centering

 \begin{tikzpicture}
	\node [label={[label distance=0.05cm]above:$k_{1}$}] (k1) at (-1,2.5) {};
	\node [label={[label distance=0.05cm]above:$k_{2}$}] (k2) at (1,2.5) {};
	\node [label=left:{$k_{3}$}] (k3) at (0,1.5) {};
	
	\node [label={[label distance=0.05cm]right:$m_{2}$}] (m2) at (0,0.5) {};
	\node [label={[label distance=0.05cm]135:$m_{1}$}] (m1) at (-1,-0.5) {};
	\node [label={[label distance=0.05cm]45:$m_{3}$}] (m3) at (1,-0.5) {};
			
    \node [label={[label distance=0.08cm]-45:$l_{2}$}] (l2) at (-1.5,-1) {};
	\node [label=left:{$l_{1}$}] (l1) at (-3.5,-1) {};
	\node [label={[label distance=0.08cm]below:$l_{3}$}] (l3) at (-2.5,-2) {};
			
	\node [label={[label distance=0.08cm]225:$n_{1}$}] (n1) at (1.5,-1) {};
	\node [label=right:{$n_{2}$}] (n2) at (3.5,-1) {};
	\node [label={[label distance=0.08cm]below:$n_{3}$}] (n3) at (2.5,-2) {};
		
    \draw (k1) -- (k2);
	\draw (k1) -- (k3);
	\draw (k2) -- (k3);
			
	\draw (m1) -- (m2);
	\draw (m1) -- (m3);
	\draw (m2) -- (m3);
			
	\draw (n1) -- (n2);
	\draw (n1) -- (n3);
	\draw (n2) -- (n3);
			
	\draw (l1) -- (l2);
	\draw (l1) -- (l3);
	\draw (l2) -- (l3);
			
    \draw (k3) -- (m2);
	\draw (l2) -- (m1);
	\draw (n1) -- (m3);
	\draw (k1) -- (l1);
	\draw (l3) -- (n3);
	\draw (n2) -- (k2);
			
	\draw [red, thick] (l1)--(l3)--(n3)--(n1)--(n2)--(k2)--(k1)--(k3)--(m2)--(m3)--(m1)--(l2)--(l1);
	\end{tikzpicture}
    \caption{Proof of \autoref{cl.H.noK4.minor}: The Hamiltonian graph $L(K)$.}
    \label{fig.Hamiltonian.K}
\end{figure}

Let $\{k_{1}, k_{2}, k_{3}\}$, $\{l_{1}, l_{2}, l_{3}\}$, $\{m_{1}, m_{2}, m_{3}\}$ and $\{n_{1}, n_{2}, n_{3}\}$,
be the four cliques of size three of the graph $G^{\prime}$. Without loss of generality we may assume that:
$$\{\{k_{3},m_{2}\}, \{m_{1}, l_{2}\}, \{m_{3}, n_{1}\}, \{k_{1}, l_{1}\}, \{l_{3}, n_{3}\}, \{n_{2}, k_{2}\} \} \subseteq E(G^{\prime}).$$
We claim that $G$ has no special edges. Suppose not. Let $e$ be a special edge of $G$ and let $P_{e}$ be the corresponding path of the special edge $e$.
Either both the endpoints of $e$ lie in one of the four triangles of $G^{\prime}$
or the endpoints of $e$ lie in two different triangles of $G^{\prime}$. 

Suppose that the former case holds. Without loss of generality we may assume that $e = \{k_{1}, k_{2}\}$. 
Then $G[V(P_{e})\cup \{k_{3}, l_{1}, l_{3}, n_{3}, n_{2}\} ]$ is a theta$^{+}$, which is a contradiction.
Hence, there exist two triangles of $G^{\prime}$ such that special edge $e$ joins these two triangles. Without loss of generality we may assume that $e = \{k_{1}, l_{1}\}$. 
Then $G[V(P_{e})\cup \{k_{1}, k_{2}, k_{3}\} \cup \{l_{1}, l_{2}, l_{3}\} \cup \{m_{1}, m_{2}, n_{3}, n_{2}\}]$ is a prism$^{+}$, which is a contradiction. 

Thus $G$ has no special edges and hence $G=G^{\prime} = L(K)$. Let $C: = l_{1}l_{3}n_{3}n_{1}n_{2}k_{2}k_{1}k_{3}m_{2}m_{3}m_{1}l_{2}l_{1}$ (see \autoref{fig.Hamiltonian.K}).
Then $C$ is a Hamiltonian cycle in $G$, which is a contradiction. Hence, $H$ has no $K_{4}$-minor.
\end{proof}

\bigskip
Since $H$ is 2-connected, $G^{\prime} = L(H)$, and $G^{\prime}$ has no Hamiltonian cycle which uses all of its edges, it follows that 
$H$ has at least four vertices and $H$ contains a vertex of degree at least three. 
Thus, since by \autoref{cl.H.noK4.minor}, $H$ has no $K_{4}$-minor, it follows from \autoref{lem:sp} that $H$ has at least one vertex of degree two.
Let $c$ be such a vertex and let $e_{1}^{c}$ and $e_{2}^{c}$ be the two edges of $H$ which are incident to $c$. 
For each $i \in [2]$ let $P_{i}$ be a minimum-length path which has as one end the vertex $c$, contains the edge $e_{i}^{c}$, and the 
other end of $P_{i}$ is a vertex of degree three. 
The paths $P_{1}, P_{2}$ exist, because $H$ is 2-connected and not a cycle. Let $a,b$ be the degree-three ends of $P_{1}$ and $P_{2}$ respectively.

We claim that $a\neq b$. Suppose not. Then, since $P_{1}\cup P_{2}$ is a cycle, and $H$ is not a cycle, the vertex $a=b$ is a cutvertex of $H$, 
contradicting \autoref{cl.H.is.2conn}. Thus, $a\neq b$.

We claim that the sum of the lengths of $P_{1}$ and $P_{2}$ is at most three.  Suppose not, and let $e_{a}$ and $e_{b}$ be the edges 
of $P_{1}$ and $P_{2}$ which are incident to $a$ and $b$, respectively. Then $\{e_{a}, e_{b}\}$ is a 2-edge-cutset of $H$ and the graph 
$H\setminus \{e_{a}, e_{b}\}$ has at least two components, 
each of which contains at least two edges, contradicting \autoref{cl.H.has.no.two.edgecutset.with.two.sides.big}. This proves our claim. 

Let $\mathcal{C}$ be the set of the connected components of $H \setminus \{a, b\}$.
Since $\Delta(H) \leq 3$ and $H$ is 2-connected, it follows that either $|\mathcal{C}| = 2$ or $|\mathcal{C}| = 3$.

\begin{claim}
\label{cl.structure.of.H}
$|\mathcal{C}| = 3$.
\end{claim}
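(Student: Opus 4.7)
The plan is to assume, for a contradiction, that $|\mathcal{C}| = 2$ and to exhibit a theta subgraph $\Theta \subseteq H$ whose three paths each have length at least two. By \autoref{lem.line.graph.subgraph}, $L(\Theta)$ is then an induced subgraph of $L(H) = G^{\prime}$; when all three paths of $\Theta$ have length at least three, $L(\Theta)$ is a prism, contradicting that $G^{\prime}$ contains no induced 3PC, while if some path has length exactly two, $L(\Theta)$ is a short prism, which is a wheel by the argument sketched in the introductory remarks, contradicting wheel-freeness of $G^{\prime}$.

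To set this up I would let $C_c$ denote the component of $H \setminus \{a, b\}$ containing $c$; note that $V(C_c)$ equals $\{c\}$ if $P_1$ and $P_2$ both have length $1$, and equals $\{c, c_1\}$ otherwise. Let $D$ be the other component of $H \setminus \{a, b\}$, and let $Q_c$ be the unique $(a,b)$-path through $C_c$, of length $2$ or $3$. Let $a_1, a_2$ be the two neighbors of $a$ in $V(D)$ and $b_1, b_2$ the two neighbors of $b$ in $V(D)$. Using 2-sparsity, triangle-freeness, and connectedness of $D$, one checks that $a_1, a_2, b_1, b_2$ are four distinct degree-$2$ leaves of $D$, that $ab \notin E(H)$, and, setting $H' := H[\{a, b\} \cup V(D)]$, that removing any one of $a, b, a_1, a_2, b_1, b_2$ from $H'$ does not disconnect $a$ from $b$, since $D$ minus a leaf is still connected.

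Next I would split on whether two internally vertex-disjoint $(a, b)$-paths exist in $H'$. If so, Menger's theorem provides two such paths, each of length at least $2$; together with $Q_c$, whose internal vertices lie outside $V(H')$, they form a theta at $(a, b)$. If not, some cutvertex of $H'$ separates $a$ from $b$. I would then let $B$ be the block of $H'$ containing $a$: since $a$ is not a cutvertex, both edges $aa_1, aa_2$ lie in $B$, so $B$ is $2$-connected. Let $v$ be the cutvertex of $H'$ lying in $B$ on the block-tree path from $a$ to $b$. The preliminary analysis gives $v \in V(D) \setminus \{a_1, a_2, b_1, b_2\}$, so $v$ is not adjacent to $a$ or $b$ in $H$. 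Menger inside $B$ then yields two internally disjoint $(a, v)$-paths of length at least $2$; concatenating $Q_c$ with any $(b, v)$-path in the $b$-side component of $H' \setminus v$ produces a third $(a, v)$-path of length at least $4$. Its internal vertices lie in $V(C_c)$ or on the $b$-side of $v$, hence are disjoint from $V(B) \setminus \{a, v\}$, which lies on the $a$-side of $v$, so the three paths are internally vertex-disjoint and together form a theta at $(a, v)$.

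The main obstacle is the case where the $(a, b)$-connectivity in $H'$ is only $1$: one must identify the correct pair of theta-endpoints (namely $a$ and the cutvertex $v$) by passing to the block of $H'$ containing $a$, rule out that $v$ coincides with a neighbor of $a$ or $b$ (which would produce a length-$1$ path inadmissible for a theta), and argue internal-vertex-disjointness of the three paths across three sources ($B$, $C_c$, and the $b$-side of $H' \setminus v$), the last being possible only because $C_c$ is completely sealed off from $V(D)$ in $H \setminus \{a, b\}$.
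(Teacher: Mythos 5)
Your construction of the theta is essentially sound (the setup of $C_c$, $D$, $Q_c$, the four distinct degree-two attachment vertices, the Menger dichotomy, and the block argument in the cutvertex case all check out), but the final step --- converting the theta into a contradiction --- has a genuine gap. You claim that if some path of $\Theta$ has length exactly two, then $L(\Theta)$ is a short prism, ``which is a wheel by the argument sketched in the introductory remarks.'' This is false, and it rests on a misreading of that remark: the paper asserts only that pyramids with a path of length one are wheels under its definitions, not short prisms. A short prism is neither a wheel nor a 3PC: its maximum degree is three, whereas a wheel on the same vertex and edge counts would need a hub of degree at least four (for instance, the triangular prism $L(K_{2,3})$ is well known to be wheel-free), and short prisms are deliberately excluded from the paper's list of 3PCs. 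Indeed, a short prism with $P_1=k_1l_1$ of length one has the Hamiltonian cycle $k_1\,k_2\,(P_2)\,l_2\,l_1\,l_3\,(P_3)\,k_3\,k_1$, so it cannot possibly be one of the forbidden structures. The failure is not hypothetical: in your first case the two Menger paths have length at least three (since $a$ and $b$ have no common neighbour in $D$), but $Q_c$ may have length two ($c$ adjacent to both $a$ and $b$), and in your second case $v$ may be adjacent to $a_1$ or $a_2$; in either situation $L(\Theta)$ is a short prism and no contradiction follows.

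This is why the paper takes a different route at exactly this point: instead of a theta, it produces a $K_4$-subdivision (two disjoint $(A,B)$-paths in $D$, a path linking them inside $D$, and $Q_c$), contradicting \autoref{cl.H.noK4.minor}, and it kills the cutvertex case with a 2-edge-cutset and \autoref{cl.H.has.no.two.edgecutset.with.two.sides.big}; neither argument cares about the length of $Q_c$. Your approach can likely be repaired the same way --- the connectedness of $D$ supplies the linking path between the interiors of your two Menger paths, upgrading the theta to a $K_4$-subdivision --- but as written, the short-prism outcome is not a contradiction and the proof is incomplete.
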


\begin{proof}[Proof of \autoref{cl.structure.of.H}]
Let us suppose towards a contradiction that $|\mathcal{C}| = 2$. Let $H_{1}, H_{2}$ be the two connected components of $H \setminus \{a,b\}$, 
where $c \in V(H_{1})$. 

Since $H$ is 2-sparse and the vertices $a,b$ have degree three in $H$, it follows that $\{a,b\} \notin E(H)$. Since $d(a) = d(b) = 3$ and $\{a,b\} \notin E(H)$, it follows that each of $a,b$ has two neighbors in $H_{2}$. Let $a_{1}, a_{2}$ and $b_{1}, b_{2}$ be the neighbors 
of $a$ and $b$ respectively in $H_{2}$. Let $A: = \{a_{1}, a_{2}\}$ and $B:= \{b_{1}, b_{2}\}$. 
By Menger's theorem (see, for example \cite{menger1927allgemeinen}), applied in the component $H_{2}$, either there exist two vertex-disjoint $(A,B)$-paths in $H_{2}$ or there exists a 
vertex separating $A$ from $B$ in $H_{2}$. 

We claim that there exist no two vertex-disjoint $(A,B)$-paths in $H_{2}$. Suppose not, and let 
$Q_{1}, Q_{2}$ be such paths. Since $H_{2}$ is connected, there exists a $(V(Q_{1}), V(Q_{2}))$-path. Let $Q_{3}$ be a minimum length $(V(Q_{1}), V(Q_{2}))$-path
in $H_{2}$. Then no internal vertex of $Q_{3}$ lies in $V(Q_{1})\cup V(Q_{2})$. Let
$$K: = H_{1} \cup Q_{1} \cup Q_{2} \cup Q_{3} + \{\{a,a_{1}\}, \{a,a_{2}\}, \{b, b_{1}\}, \{b, b_{2}\}\}.$$ 
Then $K$ is a subdivision of $K_{4}$, and thus $H$ has a $K_{4}$-minor, which contradicts \autoref{cl.H.noK4.minor}.

Hence, there exists $d \in V(H_{2})$, such that $d$ separates $A$ from $B$ in $H_{2}$. Let $H_{2}^{A}$ and $H_{2}^{B}$ be a partition of $H_{2} \setminus \{d\}$, 
where $A\setminus \{d\} \subseteq V(H_{2}^{A})$ and $B\setminus \{d\} \subseteq V(H_{2}^{B})$. Since $\Delta(H)\leq 3$, without loss of generality, we may assume 
that $d$ has exactly one neighbor in $H_{2}^{A}$. Let $d_{A}$ be the neighbor of $d$ in $H_{2}^{A}$. Let $F:= \{\{d,d_{A}\}, \{b,b^{\prime}\} \}$, where 
$b^{\prime}$ is the neighbor of $b$ in $P_{2}$. Then $H \setminus F$ has two connected components, 
say $F_{1}, F_{2}$, where $F_{1}$ contains $a$ and its neighbors, except possibly $d$, and $\{\{b, b_{1}\}, \{b, b_{2}\}\} \subseteq E(F_{2})$, which contradicts
\autoref{cl.H.has.no.two.edgecutset.with.two.sides.big}. 
Thus $|\mathcal{C}| = 3$.
\end{proof}

\begin{claim}
\label{cl.final.G.short.prism}
$G^{\prime}$ is a short prism. 
\end{claim}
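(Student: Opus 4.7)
The plan is to argue that $H$ is precisely the union of three internally vertex-disjoint $(a,b)$-paths, from which the short prism structure of $G^{\prime} = L(H)$ will follow by a direct translation under the line-graph operation. Let $\mathcal{C} = \{C_{1}, C_{2}, C_{3}\}$ be the connected components of $H \setminus \{a, b\}$, with $c \in V(C_{1})$. Since $d(a) = d(b) = 3$, $ab \notin E(H)$ (by 2-sparseness, \autoref{cl.H.2sparse}), and $H$ is 2-connected, every component of $H \setminus \{a, b\}$ must be adjacent to both $a$ and $b$ (otherwise one of $a, b$ would be a cut-vertex of $H$, contradicting \autoref{cl.H.is.2conn}). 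Since $a$ and $b$ together have exactly six edges leaving $\{a, b\}$ and $|\mathcal{C}| = 3$, each $C_{i}$ contains exactly one neighbor of $a$ and exactly one neighbor of $b$; in particular $H[C_{i} \cup \{a, b\}]$ contains an $(a, b)$-path.

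The key step is to show that $H[C_{i} \cup \{a, b\}]$ is itself a path for every $i \in [3]$. Fix $i$ and put $X = C_{i}$ and $Y = \bigcup_{j \neq i} C_{j}$. Both $X$ and $Y$ are non-empty, and there are no edges of $H$ between them. The graph $H[Y \cup \{a, b\}]$ contains two internally disjoint $(a, b)$-paths (one through each of the two components constituting $Y$) and hence is not a path. If $H[X \cup \{a, b\}]$ were also not a path, then $(X, Y, a, b)$ would be a split of a proper 2-cutset in $H$, contradicting \autoref{cl.H.2sparse}. Therefore $H[C_{i} \cup \{a, b\}]$ is an $(a, b)$-path, which we denote $Q_{i}$. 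Consequently $H = Q_{1} \cup Q_{2} \cup Q_{3}$ is the union of three internally vertex-disjoint $(a, b)$-paths of respective lengths $\ell_{1}, \ell_{2}, \ell_{3} \geq 2$, the lower bound coming from $ab \notin E(H)$.

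Finally we pass from $H$ to $G^{\prime} = L(H)$. The three edges of $H$ incident with $a$ form a triangle in $L(H)$, and likewise for $b$; for each $i$, the edges of $Q_{i}$ form a path of length $\ell_{i} - 1 \geq 1$ in $L(H)$ joining these two triangles, and together these two triangles and three connecting paths account for every vertex and every edge of $G^{\prime}$. Thus $G^{\prime}$ is a prism (if $\ell_{i} \geq 3$ for every $i$) or a short prism (if $\ell_{i} = 2$ for some $i$). As a prism is a 3PC and $G^{\prime}$ is an induced subgraph of the 3PC-free graph $G$, the prism case cannot occur, and we conclude that $G^{\prime}$ is a short prism. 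The main obstacle is ruling out hidden edges or additional branching vertices inside the components $C_{i}$, and this is exactly what the absence of a proper 2-cutset in $H$ delivers.
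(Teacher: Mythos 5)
Your proof is correct, but it takes a genuinely different route from the paper's. The paper works with 2-edge-cutsets: for each component $C_i$ of $H\setminus\{a,b\}$ it considers the pair of edges $\{e_i^a, e_i^b\}$ joining $C_i$ to $\{a,b\}$ and invokes \autoref{cl.H.has.no.two.edgecutset.with.two.sides.big} to conclude that each of $C_2, C_3$ is a single vertex or a single edge, while $C_1$ is already known to be a path on at most two vertices from the construction of $c$, $P_1$, $P_2$; it then finishes with a short case analysis over the four resulting candidates for $H$. You instead go through the absence of a proper 2-cutset (\autoref{cl.H.2sparse}): taking $X = C_i$ and $Y$ the union of the other two components, you observe that $H[Y\cup\{a,b\}]$ contains a cycle and so cannot be a path, forcing $H[C_i\cup\{a,b\}]$ to be an $(a,b)$-path, whence $H$ is a theta and $L(H)$ is a prism or short prism, with the prism excluded because $G'$ is an induced subgraph of the 3PC-free graph $G$. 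All the supporting facts you use are available and correctly applied: $ab\notin E(H)$ follows from 2-sparseness since $d(a)=d(b)=3$; the degree count forcing exactly one neighbour of each of $a,b$ in each component is right; and $a,b$ are indeed the ends of each induced path since they have degree one in $H[C_i\cup\{a,b\}]$. Your argument is arguably cleaner in that it needs neither the bound on the lengths of $P_1, P_2$ nor the figure-based case analysis, and it identifies $H$ as a theta outright; what it costs is nothing extra, since \autoref{cl.H.2sparse} is already established in the paper (itself via the 2-edge-cutset claim), so the two proofs ultimately rest on the same structural facts about $H$.
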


\begin{proof}[Proof of \autoref{cl.final.G.short.prism}]
By \autoref{cl.structure.of.H}, we have that the graph $H \setminus \{a,b\}$ has three connected components. 
Let $C_{1}, C_{2}$ and $C_{3}$ be the three components of $H \setminus \{a,b\}$ where $c \in V(C_{1})$.
For each $i \in [3]$, let $e_{i}^{a}$ and $e_{i}^{b}$ be the edge of $H$ which has as one end the vertex $a$ or $b$, respectively, and 
its other end lies in $C_{i}$.
Since, by \autoref{cl.H.has.no.two.edgecutset.with.two.sides.big}, 
for each $i \in [3]$ there exists at least one component of $H \setminus \{e_{i}^{a}, e_{i}^{b}\}$ which is either a single vertex or a single edge, 
it follows that each of the components $C_{2}$ and $C_{3}$, is either a single vertex or a single edge. Recall that $C_1$ is a path by construction, shown to have length at most 1.

		\begin{figure}[h!]\centering
		
			\begin{tikzpicture}
			\node [label={[label distance=0.05cm]below:$b$}] (v1) at (0,0) {};
			\node (v2) at (0,0.75) {};
			\node (v3) at (1,0.75) {};
			\node (v4) at (-1,1.25) {};
			\node (v8) at (-1,0.75) {};
			\node (v5) at (0,1.25) {};
			\node (v6) at (1,1.25) {};
			\node [label={[label distance=0.05cm]above:$a$}] (v7) at (0,2) {};

			\draw (v1) -- (v2);
			\draw (v1) -- (v3);
		    \draw (v1) -- (v8);
		     \draw (v4) -- (v8);
		     \draw (v4) -- (v7);
			\draw (v2) -- (v5);
			\draw (v3) -- (v6);
			\draw (v7) -- (v5);
			\draw (v7) -- (v6);
			\end{tikzpicture}
			\hspace{1cm}
		\begin{tikzpicture}
			\node [label={[label distance=0.05cm]below:$b$}] (v1) at (0,0) {};
			\node (v2) at (0,0.75) {};
			\node (v3) at (1,0.75) {};
			\node (v4) at (-1,1) {};
			\node (v5) at (0,1.25) {};
			\node (v6) at (1,1.25) {};
			\node [label={[label distance=0.05cm]above:$a$}] (v7) at (0,2) {};

			\draw (v1) -- (v2);
			\draw (v1) -- (v3);
			\draw (v1) -- (v4);
			\draw (v2) -- (v5);
			\draw (v3) -- (v6);
			\draw (v7) -- (v4);
			\draw (v7) -- (v5);
			\draw (v7) -- (v6);
			\end{tikzpicture}
			\hspace{1cm}
			\begin{tikzpicture}
			\node [label={[label distance=0.05cm]below:$b$}] (v1) at (0,0) {};
			\node (v2) at (0,1) {};
			\node (v3) at (1,0.75) {};
			\node (v4) at (-1,1) {};
			\node (v6) at (1,1.25) {};
			\node [label={[label distance=0.05cm]above:$a$}] (v7) at (0,2) {};
			
			\draw (v1) -- (v2);
			\draw (v1) -- (v3);
			\draw (v1) -- (v4);
			\draw (v3) -- (v6);
			\draw (v7) -- (v4);
			\draw (v7) -- (v2);
			\draw (v7) -- (v6);
			\end{tikzpicture}
			\hspace{1cm}
		\begin{tikzpicture}
			\node [label={[label distance=0.05cm]below:$b$}] (v1) at (0,0) {};
			\node (v2) at (0,1) {};
			\node (v3) at (1,1) {};
			\node (v4) at (-1,1) {};
			\node [label={[label distance=0.05cm]above:$a$}] (v7) at (0,2) {};
			
			\draw (v1) -- (v2);
			\draw (v1) -- (v3);
			\draw (v1) -- (v4);
			\draw (v7) -- (v4);
			\draw (v7) -- (v2);
			\draw (v7) -- (v3);
			\end{tikzpicture}
			\caption{Proof of \autoref{cl.final.G.short.prism}: The graph $H$ is one of the graphs illustrated above.}
			\label{fig.four.cases.H}
		\end{figure}
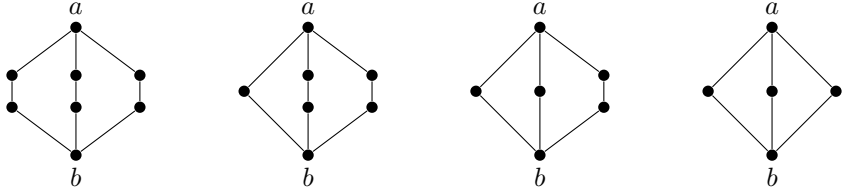

Since $H$ has no cutvertex, it follows that there are only four possibilities for the graph $H$, depending on how many of the components
$C_{1}, C_{2}, C_{3}$ consist of a single vertex (see \autoref{fig.four.cases.H}).

If each of $C_{1}, C_{2}, C_{3}$ contains two vertices, then $G^{\prime}$ is a prism, a contradiction. So at least one of the components $C_{1}, C_{2}, C_{3}$
contains only one vertex, and $G^{\prime}$ is a short prism.
\end{proof}

Since $G^{\prime}$ is a short prism, let $\{k_{1}, k_{2}, k_{3}\}, \{l_{1}, l_{2}, l_{3}\}, P_{1}, P_{2}, P_{3}$ as in the definition of the short prism in Section \ref{sec.intro}.

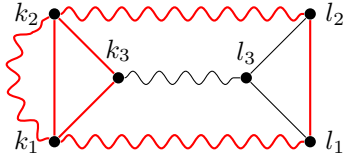
\begin{figure}[h!]\centering
	\begin{tikzpicture}
			\node [label={[label distance=0.05cm]left:$k_{1}$}] (k1) at (0,0) {};
			\node [label={[label distance=0.05cm]above:$k_{3}$}] (k3) at (1,1) {};
			\node [label={[label distance=0.05cm]left:$k_{2}$}] (k2) at (0,2) {};
			\node [label={[label distance=0.05cm]right:$l_{1}$}] (l1) at (4,0) {};
			\node [label={[label distance=0.05cm]above:$l_{3}$}] (l3) at (3,1)	{};
			\node [label={[label distance=0.05cm]right:$l_{2}$}] (l2) at (4,2) {};
			
			\draw[thick, red] (k1) -- (k3);
			\draw[thick, red] (k3) -- (k2);	
			\draw[thick, red] (k1) -- (k2);
			\draw (l1) -- (l3);
			\draw (l3) -- (l2);	
			\draw[thick, red] (l1) -- (l2);	
			
			\draw[thick, red, decorate, decoration=snake] (k2) -- (l2);
			\draw[decorate, decoration=snake] (k3) -- (l3);
			\draw[thick, red, decorate, decoration=snake] (k1) -- (l1);
			\draw[thick, red, decorate, decoration=snake] (k1) to[out=150,in=-150] (k2);
			\end{tikzpicture}
			\caption{A theta$^{+}$ in $G$ in case one of the triangles of $G^{\prime}$ contains a special edge.}
			\label{fig.theta.inG}
\end{figure}

We claim that no edge of a triangle of $G^{\prime}$ is a special edge of $G$. Suppose not. Without loss of generality we may assume that $\{k_{1}, k_{2}\} \in E(G^{\prime})$
is a special edge of $G$. Let $P_{k_{1}k_{2}}$ be the corresponding path of the edge $\{k_{1}, k_{2}\}$.
Then $G[V(P_{k_{1}k_{2}}) \cup V(P_{1})\cup V(P_{2}) \cup \{k_{3}\}]$ is a theta$^{+}$ (see \autoref{fig.theta.inG}), which contradicts the fact that $G$ is (theta$^{+}$)-free.

We claim that there exists at least one path in the set $\{P_{1}, P_{2}, P_{3}\}$ of length one 
such that the unique edge of this path is not a special edge of $G$.
Suppose not, 
let $Q\subseteq V(G)$ be the set of the internal vertices of the corresponding paths of all $P_i$ where $P_i$ is length one. 
Then, $G[Q \cup \{k_{1}, k_{2}, k_{3}\}\cup \{l_{1}, l_{2}, l_{3}\} \cup P_1 \cup P_2 \cup P_3]$ is a prism$^{+}$, which contradicts the fact that $G$ is (prism$^{+}$)-free.
Hence, without loss of generality we may assume that $P_{1}$ has length one and the unique edge of $P_{1}$ is not a special edge.

It follows that every special edge of $G^{\prime}$ is contained in $E(P_{2})\cup E(P_{3})$.
Let $C^{\prime}:= \left( P_{2} \cup P_{3} \cup (\{k_{1}, l_{1}\}, \emptyset)\right) + \{\{k_{1}, k_{2}\}, \{k_{1}, k_{3}\}, \{l_{1}, l_{2}\}, \{l_{1}, l_{3}\}\} $. 
Then, $C^{\prime}$ is a Hamiltonian cycle of $G^{\prime}$ which contains all the special edges of $G$, 
contradicting \autoref{cl.G'.no.HC.with.special.edges}. This concludes the proof.
\end{proof}

\end{document}